\numberwithin{equation}{section}
\def\moverlay{\mathpalette\mov@rlay}
\def\mov@rlay#1#2{\leavevmode\vtop{%
		\baselineskip\z@skip \lineskiplimit-\maxdimen
		\ialign{\hfil$\m@th#1##$\hfil\cr#2\crcr}}}
\newcommand{\charfusion}[3][\mathord]{
	#1{\ifx#1\mathop\vphantom{#2}\fi
		\mathpalette\mov@rlay{#2\cr#3}
	}
	\ifx#1\mathop\expandafter\displaylimits\fi}
\newcommand{\cupdot}{\charfusion[\mathbin]{\cup}{\cdot}}
\begin{document}

\title[Ising model on preferential attachment models]{Ising model on preferential attachment models}



\author[1]{\fnm{Remco} \sur{van der Hofstad} \orcidlink{0000-0003-1331-9697}}\email{r.w.v.d.hofstad@TUE.nl}

\author[2]{\fnm{Rounak} \sur{Ray} \orcidlink{0000-0001-8219-9778}}\email{ray.rounak@outlook.com}


\affil[1]{\orgdiv{Department of Mathematics and Computer Science}, \orgname{Eindhoven University of Technology}, \orgaddress{\country{The Netherlands}}}

\affil[2]{\orgdiv{Probability and Statistics of Discrete Structures}, \orgname{Simons Laufer Mathematical Sciences Institute}, \orgaddress{\country{The United States of America}}}



\abstract{We study the Ising model on affine preferential attachment models with general parameters. We identify the thermodynamic limit of several quantities, arising in the large graph limit, such as pressure per particle, magnetisation, and internal energy for these models. Furthermore, for $m\geq 2$, we determine the inverse critical temperature for preferential attachment models as $\beta_c(m,\delta)=0$ when $\delta\in(-m,0]$, while, for $\delta>0$,
\[
	\beta_c(m,\delta)= \arctanh\left\{ \frac{\delta}{2\big( m(m+\delta)+\sqrt{m(m-1)(m+\delta)(m+\delta+1)} \big)} \right\}~.
\]
Our proof for the thermodynamic limit of pressure per particle critically relies on the belief propagation theory for factor models on locally tree-like graphs, as developed by Dembo, Montanari, and Sun. It has been proved that preferential attachment models admit the P\'{o}lya point tree as their local limit under general conditions. We use the explicit characterisation of the P\'{o}lya point tree and belief propagation for factor models to obtain the explicit expression for the thermodynamic limit of the pressure per particle. Next, we use the convexity properties of the internal energy and magnetisation to determine their thermodynamic limits. To study the phase transition, we prove that the inverse critical temperature for a sequence of graphs and its local limit are equal. Finally, we show that $\beta_c(m,\delta)$ is the inverse critical temperature for the P\'{o}lya point tree with parameters $m$ and $\delta$, using results from Lyons who shows that the critical inverse temperature is closely related to the percolation critical threshold. This part of the proof heavily relies on the critical percolation threshold for P\'{o}lya point trees established earlier with Hazra.
}


\keywords{local convergence, preferential attachment model, branching ratio, Ising model, percolation}


\pacs[MSC Classification]{05C80}

\maketitle
\section{Introduction}\label{sec:introduction}
The Ising model, originally introduced to describe ferromagnetism in statistical mechanics, represents a system of spins on a graph, where each spin can take one of two states. The interactions between neighbouring spins determine the system's overall magnetic properties. This model has since found applications beyond physics, including in sociology, biology, neuroscience, and finance, where it helps to explain the collective behaviour of interconnected entities. The Ising model has been extensively studied in the literature; see, for example, \cite{G06}. Recently, the focus in this field has extended to the study of the Ising model on random graphs. Dembo and Montanari studied the Ising model on graphs that are locally unimodular branching processes in \cite{DM10}, restricting their analysis to cases where the average degree has finite variance. The Ising model on Erd\H{o}s-R\'{e}nyi graphs in both the zero and high-temperature regimes was studied in \cite{GdS08}. The Ising model on configuration models has been analysed in \cite{DM10,DGvdH10}. Dembo, Montanari and Sun also studied factor models on locally tree-like graphs, shedding light on the Ising model in these contexts in \cite{DMS13}.

Real-life random networks often possess power-law degree distributions. Barabási and Albert introduced a dynamic random graph model exhibiting a power-law degree distribution in \cite{Barabasi'99}. A generalisation of this model leads to a vast class of preferential attachment models, see e.g. \cite{vdH2,BergerBorgs,DEH09,DM13}. In \cite{GHHR22}, together with Hazra and extending \cite{BergerBorgs}, we proved that a large class of affine preferential attachment models with a linear attachment function converges locally to the P\'{o}lya point tree, introduced in \cite{BergerBorgs}. In this article, we study the quenched Ising model on preferential attachment models.

\subsection{Ising model on finite graphs}\label{subsec:intro:def-ising}
In this section, we define the Ising model on a sequence of (possibly random) finite graphs. Let $(G_n)_{n\geq 1}$ be a sequence of such graphs, where $G_n=(V_n,E_n)$ has vertex set $V_n=\{1,2,\ldots,n\}\equiv[n]$, and some (possibly random) set of edges $E_n$. Each vertex in $G_n$ is assigned an Ising spin value in $\{-1,+1\}$. A spin configuration of $G_n$ is denoted by ${\boldsymbol{\sigma}}=(\sigma_1,\ldots,\sigma_n)\in\{-1,+1\}^n$. For any $\beta\geq 0$ and $B_v\in\R$ for all $v\in V_n$, the Ising model on $G_n$ is given by the Boltzmann distribution
\begin{equation}\label{eq:def:boltzmann:seq}
	\mu_n(\boldsymbol{\sigma})=\frac{1}{Z_n(\beta,\underline{B})}\exp\left\{ \beta\sum\limits_{\{u,v\}\in E_n}\sigma_u\sigma_v +\sum\limits_{v\in V_n}B_v\sigma_v \right\},
\end{equation}
where $Z_n({\beta,\underline{B}})$ is the normalising constant, also known as the \emph{partition function}, given by
\begin{equation}\label{eq:def:partition-function}
	Z_n({\beta,\underline{B}})=\sum\limits_{\boldsymbol{\sigma}\in\{-1,+1\}^n}\exp\left\{ \beta\sum\limits_{\{u,v\}\in E_n}\sigma_u\sigma_v +\sum\limits_{v\in V_n}B_v\sigma_v \right\}.
\end{equation}
For any function $f:\{-1,+1\}^n\to \R,$ define $\langle f(\boldsymbol{\sigma}) \rangle_{\mu_n}$ as
\begin{equation}\label{eq:def:quenched-expectation}
	\langle f(\boldsymbol{\sigma}) \rangle_{\mu_n} =\sum\limits_{\boldsymbol{\sigma}\in\{-1,+1\}^n} f(\boldsymbol{\sigma})\mu_n(\boldsymbol{\sigma}).
\end{equation}
Lastly, we define the thermodynamic quantity of central interest, the so-called \emph{pressure per particle}, as
\begin{equation}\label{eq:def:pressure-per-particle:initial}
	\psi_n(\beta,\underline{B})=\frac{1}{n}\log Z_n(\beta,\underline{B}).
\end{equation}
In this paper, we shall mainly consider a fixed external magnetic field acting on all the vertices of the graph, resulting in $B_v=B$ for all $v\in V_n$. Note that, under the assumption of a fixed magnetic field, the Boltzmann distribution function is symmetric about the sign of $B$. Thus, without loss of generality, we assume $B>0$, and denote the partition function and pressure per particle as $Z_n(\beta,B),$ and
\begin{equation}\label{eq:def:pressure-per-particle}
	\psi_n(\beta,{B})=\frac{1}{n}\log Z_n(\beta,{B}),
\end{equation}
respectively. We shall study the thermodynamic limit of this pressure per particle in great detail for preferential attachment models.
\subsection{The random graph models}\label{subsec:intro:models}
In this paper, we focus on sequential preferential attachment models that have the P\'{o}lya point trees as their local limit, specifically models (a), (b) and (d) as defined in \cite{vdH1}, excluding their tree cases. We fix $m\in\N\setminus{\{1\}}$ and $\delta>-m$. In these models, every new vertex is introduced to the existing graph with $m$ edges incident to it. The models are defined by their edge-connection probabilities.

We start from any finite graph with $2$ vertices, and finitely many connections between them, such that at least one of the initial vertices has degree at most $m$. Let $a_1$ and $a_2$ denote the degrees of the vertices $1$ and $2$, respectively. Without loss of generality, consider $a_2\leq m$. We also allow for self-loops in the initial graph.

\medskip\noindent
\textbf{Model (a).} For each new vertex $v$ joining the graph and $j=[m]$, the attachment probabilities are given by
\eqn{\label{def:model:a}
	\prob\Big( v\overset{j}{\rightsquigarrow} u\mid\PA^{(a)}_{v,j-1}(m,\delta) \Big)= \begin{cases}
		\frac{d_u(v,j-1)+\delta}{c_{v,j}^{(a)}}\hspace{1.15cm}\text{for}~u<v,\\
		\frac{d_v(v,j-1)+{j\delta}/{m}}{c_{v,j}^{(a)}}\hspace{0.65cm}\text{for}~u=v,
	\end{cases}
}
where $v\overset{j}{\rightsquigarrow} u$ denotes that vertex $v$ connects to $u$ with its $j$-th edge, $\PA^{(a)}_{v,j}({m},\delta)$ denotes the graph on $v$ vertices, \FC{with} the $v$-th vertex \FC{having already paired its first} $j$ out-edges, and $d_u(v,j)$ denoting the degree of vertex $u$ in $\PA^{(a)}_{v,j}({m},\delta)$. We identify $\PA^{(a)}_{v+1,0}({m},\delta)$ with $\PA^{(a)}_{v,m}({m},\delta)$. The normalizing constant $c_{v,j}^{(a)}$ in \eqref{def:model:a} equals
\eqn{\label{eq:normali}
	c_{v,j}^{(a)} := a_{[2]}+2\delta+(2m+\delta)(v-3)+2(j-1)+1+\frac{j\delta}{m} \, ,
}
where $a_{[2]}=a_1+a_2$. We denote the above model by $\PA^{(a)}_{v}({m},\delta)$, which is equivalent to $\PA_v^{(m,\delta)}(a)$ as defined in \cite{vdH1}.

\medskip
\begin{remark}[Universality across preferential attachment models]
	\rm Although we state the main theorem for model (a) described above, the result holds true for models (b) and (d), described in \cite{vdH1}, as well. To reduce notational complexity, we focus on model (a) here. \hfill$\blacksquare$
\end{remark}
\medskip
\begin{remark}[Marked local convergence]
	\rm The P\'olya point tree is the marked local limit of a large class of affine preferential attachment models. We refer the reader to \cite[Section~1.3]{GHHR22}, for a detailed definition of marked local convergence. \hfill $\blacksquare$
\end{remark}
		\subsection{The P\'{o}lya point tree}\label{sec:RPPT}
		Berger, Borgs, Chayes and Saberi \cite{BergerBorgs} proved that the P\'olya point tree ($\PPT$) is the local limit of our preferential attachment model. In this section, {we define the $\PPT$ that will act as the vertex-marked local limit of our preferential attachment graphs. In \cite{GHHR22}, together with Hazra, we have proved that the random P\'olya point tree ($\RPPT$) is the vertex-marked local limit of a wide class of preferential attachment models with i.i.d.\ out-degrees. Further, the P\'olya point tree is a special case of the random P\'olya point tree. We adapt the definition of the $\PPT$ from \cite{BergerBorgs} and \cite{GHHR22}. We start by defining the vertex set of this PPT:}

\medskip
		\begin{Definition}[Ulam-Harris set and its ordering]\label{def:ulam}
			{\rm Let $\N_0=\N\cup\{0\}$. {The {\em Ulam-Harris set}} is
				\begin{equation*}
					\mathcal{N}=\bigcup\limits_{n\in\N_0}\N^n.
				\end{equation*}
				For $x = x_1\cdots x_n\in\N^n$ and $k\in\N$, we let \(n\) be its length, denote {the element $x_1\cdots x_nk$ by $xk\in\N^{n+1},$ and call it the \(k\)-th child of \(x\).} The~{\em root} of the Ulam-Harris set is denoted by $\emp\in\N^0$.
				{The lexicographic \textit{ordering}} between the elements of the Ulam-Harris set {is as follows:}
				\begin{itemize}
					\item[(a)] for any two elements $x,y\in\mathcal{U}$, {$x>y$ when the length of $x$ is more than that of $y$;}
					\item[(b)] if $x,y\in \N^n$ for some $n$, then $x>y$ if there exists $i\leq n,$ such that $x_j=~y_j~\forall j<i$ and $x_i>~y_i$.\hfill$\blacksquare$
			\end{itemize}}
		\end{Definition}

		We use the elements of the Ulam-Harris set to identify nodes in a rooted tree, since the notation in Definition~\ref{def:ulam} allows us to denote the relationships between children and parents.
		
		\paragraph{\bf P\'olya point tree ($\PPT$).}
		The  P\'olya point tree, $\PPT(m,\delta)$ is an {\em infinite multi-type rooted random tree}, where $m$ and $\delta>-m$ are the parameters of our preferential attachment model. It is a multi-type branching process, with a mixed continuous and discrete type space. We now describe its properties one by one.
		
		\paragraph{\bf Descriptions of the distributions and parameters used.}
		\begin{enumerate}
			\item[{\btr}] {Define} $\chi = \frac{m+\delta}{2m+\delta}$.
			\smallskip
			\item[{\btr}] {Let} $\Gamma_{\sf{in}}(m)$ denote a Gamma distribution with parameters $m+\delta$ and $1$.
		\end{enumerate}
		\smallskip
		
		\paragraph{\bf Feature of the vertices of $\PPT$.}
		Below, to avoid confusion, we use `node' for a vertex in the PPT and `vertex' for a vertex in the PAM. We now discuss the properties of the nodes in $\PPT(m,\delta)$. Every node \({\omega}\), except the root in the $\PPT$, has
		\begin{enumerate}
			\smallskip
			\item[{\btr}] an {\em age} $A_{\omega}\in[0,1]$;
			\smallskip
			\item[{\btr}] a positive number $\Gamma_{\omega}$ called its {\em strength};
			\smallskip
			\item[{\btr}] a {label in $\{{\Old},{\Young}\}$} depending on the age of the {node} and its parent, {with $\Young$ denoting that the node is younger than its parent and $\Old$ denoting that the node is older than its parent.}
		\end{enumerate}

		Based on its label being $\Old$ or $\Young$, every {node} $\omega$ has a number $m_{-}(\omega)$ associated to it. If $\omega$ has label $\Old$, then $m_{-}(\omega)=m$, and $\Gamma_{\omega}$ is distributed as $\Gamma_{\sf{in}}( m+1 )$,
		while if $\omega$ has label $\Young$, then $m_{-}(\omega)=m-1$, and given $m_{-}(\omega), \Gamma_{\omega}$ is distributed as $\Gamma_{\sf{in}}( m )$. The type-space of the P\'{o}lya point tree is given by
		\eqn{\label{eq:state-space:PPT}
		\Scal=[0,1]\times\{\Old,\Young\}\cup[0,1]~.}
		
		\smallskip
		\allowdisplaybreaks
		\paragraph{\bf Construction of the PPT.} {We next use the above definitions to construct the PPT using an {\em exploration process}.}
		The root is special in the tree. It has label $\emp$ and its age $A_\emp$ is an independent uniform random variable in $[0,1]$. The root $\emp$ has no label in $\{\Old,\Young\}$, and we let $m_{-}(\emp)=m$.
		Then the {children of the root in the} P\'{o}lya point tree {are} constructed as follows:
		\begin{enumerate}
			\item Sample $U_1,\ldots,U_{m_{-}(\emp)}$ uniform random variables on $[0,1]$, independent of the rest;  
			\item To nodes $\emp1,\ldots, \emp m_{-}(\emp),$ assign the ages $U_1^{1/\chi} A_\emp,\ldots,U_{m_{-}(\emp)}^{1/\chi} A_\emp$ and label $\Old$;
			\item Assign ages $A_{\emp(m_{-}(\emp)+1)},\ldots, A_{\emp(m_{-}(\emp)+\din_\emp)}$ to nodes $\emp(m_{-}(\emp)+1),\ldots, \emp(m_{-}(\emp)+\din_\emp)$. These ages are the \FC{occurrence times} given by a conditionally independent Poisson point process on $[A_\emp,1]$ defined by the random intensity
			\eqn{\label{def:rho_emp}
				\rho_{\emp}(x) = {(1-\chi)}{\Gamma_\emp}\frac{x^{-\chi}}{A_\emp^{1-\chi}},}
			and $\din_\emp$ being the total number of points of this process. Assign label $\Young$ to them;
			\item Draw an edge between $\emp$ and each of $\emp 1,\ldots, \emp(m_{-}(\emp)+\din_\emp)$; 
			\item Label $\emp$ as explored and nodes $\emp1,\ldots, \emp(m_{-}(\emp)+\din_\emp)$ as unexplored.
		\end{enumerate}	
		\smallskip
		Then, recursively over the elements in the set of unexplored nodes, we perform the following breadth-first exploration:
		\smallskip
		\begin{enumerate}
			\item Let $\omega$ denote the smallest currently unexplored node in the Ulam-Harris ordering;
			\item Sample $m_{-}(\omega)$ i.i.d.\ random variables $U_{\omega1},\ldots,U_{\omega m_{-}(\omega)}$ independently from all the previous steps and from each other, uniformly on $[0,1]$. To nodes $\omega 1,\ldots,\omega m_{-}(\omega)$, assign the ages $U_{\omega1}^{1/\chi}A_\omega,\ldots,U_{\omega m_{-}(\omega)}^{\FC{1/\chi}}A_\omega$ and label $\Old$, and set them unexplored;
			\item Let $A_{\omega(m_{-}(\omega)+1)},\ldots,A_{\omega(m_{-}(\omega)+\din_\omega)}$ be the random $\din_\omega$ points given by a conditionally independent Poisson process on $[A_{\omega},1]$ with random intensity
			\eqn{
				\label{for:pointgraph:poisson}
				\rho_{\omega}(x) = {(1-\chi)}{\Gamma_\omega}\frac{x^{-\chi}}{A_\omega^{1-\chi}}.
			}
			Assign these ages to $\omega(m_{-}(\omega)+1),\ldots,\omega(m_{-}(\omega)+\din_\omega)$. {Assign them label $\Young$,} and set them unexplored;
			\item Draw an edge between $\omega$ and each one of the nodes $\omega 1,\ldots,\omega(m_{-}(\omega)+\din_\omega)$;
			\item Set $\omega$ as explored.
		\end{enumerate}
		\smallskip
		We call the resulting tree the {\em P\'olya point tree with parameters $m$ and $\delta$,} and denote it by $\PPT(m,\delta)$. Occasionally we drop $m$ and $\delta$ when referring to $\PPT(m,\delta)$. The vertex-marks of $\PPT$ here are different from the ones in \cite{BergerBorgs}, but we can always retrieve these vertex-marks in the definition in \cite{BergerBorgs} from the vertex-marks used here and the other way round also.

For any node \(\omega\) in the P\'{o}lya point tree, \(\bff(\omega)\) denote its vertex-mark in \(\Scal\). For example, \(\bff(\emp)=U\), whereas \(\bff(\emp 1)=(U_1^{1/\chi}U,\Old)\), where \(U,U_1\) are i.i.d.\, \(\Unif[0,1]\) random variables. 
\subsection{Main result}\label{sebsec:intro:main-results}
In this section, we describe the main results of this paper. Our first theorem identifies the explicit thermodynamic limit of the pressure per particle for preferential attachment models:

\medskip

\begin{Theorem}[Thermodynamic limit of the pressure]\label{thm:thermodynamic-limit:pressure}
	Fix $m\geq 2$ and $\delta>0$, and consider $\PA_n(m,\delta)$ (any of the three models (a), (b), and (d) described in Section~\ref{subsec:intro:models}). Then for all $0\leq \beta<\infty$ and $B\in\R$, the thermodynamic limit of the pressure exists and is deterministic:
	\eqn{\label{eq:thm:thermodynamic-limit:pressure}
		\psi_n(\beta,B)\overset{\prob}{\to} \varphi(\beta,B)~,}
	where $\varphi(\beta,B)$ is a constant.
	The thermodynamic limit of the pressure satisfies $\varphi(\beta,B)=\varphi(\beta,-B)$ for $B>0$, and $\varphi(\beta,0)=\lim\limits_{B\searrow 0}\varphi(\beta,B)$. For $B>0$, \(\varphi(\beta,B)\) is given by
	\eqan{\label{eq:thm:thermodynamic-limit:pressure:explicit}
		& \frac{\E[D(\emp)]}{2}\log \cosh(\beta)-\frac{{\E[D(\emp)]}}{2}\E\Big[\log\big\{1+\tanh(\beta) \tanh(\h(\bff(\hat{\emp} 1)))\tanh(\h(\bff(\hat{\emp}))) \big\}\Big]\\
		&\hspace{0.03cm}+\E\Big[ \log\Big( \e^B\prod\limits_{i=1}^{D(\emp)}\big\{1+\tanh(\beta)\tanh(\h(\bff(\emp i)))\big\}+\e^{-B}\prod\limits_{i=1}^{D(\emp)}\big\{1-\tanh(\beta)\tanh(\h(\bff(\emp i)))\big\} \Big) \Big],\nn	
	}
	where $\hat{\emp}=(U,\Young)$ and $U\sim\Unif[0,1]$, and $\{\h(\omega):\omega\in \Scal\}$ are independent copies of the fixed point functional $\h^\star(\omega)=\h^\star(\omega,\beta,B)$ of the following distributional fixed point equation
	\eqn{\label{eq:distributional-recursion:1}
		\h(\omega)\overset{d}{=}B+\sum\limits_{i=1}^{D(v)}\arctanh \Big(\tanh(\beta)\tanh\big(\h(\bff(v i))\big)\Big)~,}
	where $D(v)$ is the degree of the vertex \(v\) such that \(\bff(v)=\omega\) in the P\'olya point tree and \(vi\) are the children of the node \(v\) in the P\'olya point tree and \(\{\h(\bff(v i)):i\in[D(v)]\}\) are independent of \(D(v)\).
\end{Theorem}

Next, we provide the thermodynamic limits of thermodynamic quantities, such as internal energy and magnetisation for the Ising model on preferential attachment models:

\medskip

\begin{Theorem}[Thermodynamic quantities]\label{thm:limit:thermodynamic-quantities}
	Let $(G_n)_{n\geq 1}$ be a sequence of preferential attachment graphs with parameters $m$ and $\delta$. Fix $\beta\geq 0$ and $B\in\R$, 
	\begin{enumerate}
		\item[(a)]\label{thm:magnetization} {\bf Magnetisation.}
		Let $M_n(\beta,B)=\frac{1}{n}\langle\sum_{i\in[n]}\sigma_i\rangle_{\mu_n}$ be the magnetisation per vertex. Then, its thermodynamic limit exists and is given by
		\eqn{\label{eq:def:magnetization}
			\frac{\partial}{\partial B}\psi_n(\beta,B)=M_n(\beta,B)\overset{\prob}{\to}M(\beta,B)=\frac{\partial}{\partial B}\varphi(\beta,B)=\E[\tanh(\h(\bff(\emp)))]~.
		}
		\item[(b)]\label{thm:internal:energy} {\bf Internal energy.}
		Let $U_n(\beta,B)=-\frac{1}{n} \langle\sum_{(u,v)\in E_n} \sigma_u\sigma_v\rangle_{\mu_n}$ denote the internal energy per vertex. Then, its thermodynamic limit exists and is given by
		\eqan{\label{eq:def:internal:energy}
			U_n(\beta,B)\overset{\prob}{\to}U(\beta,B)=&-\frac{\partial}{\partial\beta}\varphi(\beta,B)\nn\\
			=&-m\E\left[ \frac{\tanh(\beta)+\tanh(\h(\bff(\hat{\emp})))\tanh(\h(\bff(\hat{\emp} 1)))}{1+\tanh(\beta)\tanh(\h(\bff(\hat{\emp})))\tanh(\h(\bff(\hat{\emp} 1)))} \right]~.
		}
	\end{enumerate}
\end{Theorem}

Next, we define $\beta_c(m,\delta)$ as the Ising inverse critical temperature for a sequence of preferential attachment models with parameters $m$ and $\delta$, defined as
\eqn{\label{eq:def:inverse-critical-temperature:PA}
	\beta_c=\inf\Big\{\beta:\lim\limits_{B\searrow 0}M(\beta,B)>0\Big\}~.}
The final theorem of this paper addresses the inverse critical temperature for preferential attachment models. Here, we explicitly identify the almost sure inverse critical temperature for the preferential attachment model:

\medskip 

\begin{Theorem}[Ising inverse critical temperature]\label{thm:inv:critical-temp:PA}
	Fix $m\geq 2$ and $\delta>0$, and let $\beta_c(m,\delta)$ be the Ising inverse critical temperature for the P\'{o}lya point tree with parameters $m$ and $\delta$. Then, for $\delta>0$,
	\eqn{\label{eq:thm:inv:critical-temp:PA}
		\beta_c(m,\delta)= \arctanh\left\{ \frac{\delta}{2\big( m(m+\delta)+\sqrt{m(m-1)(m+\delta)(m+\delta+1)} \big)} \right\},}
	whereas for $\delta\in(-m,0],~\beta_c(m,\delta)=0$ almost surely.
\end{Theorem}

\medskip

\paragraph{\bf Novelty.} We identify the explicit expression for the pressure per particle for the preferential attachment model in the Ising model settings in \eqref{eq:thm:thermodynamic-limit:pressure:explicit}, and show that it equals the expression in \cite[Theorem~1.9]{DMS13}. Alongside, we identify the thermodynamic limits of related Ising quantities, and compute the explicit inverse critical temperature for Ising models on preferential attachment models.

\medskip

\paragraph{Organisation of the article.}
The remainder of the article is organised as follows: In Section~\ref{sec:preliminary:results}, we state some preliminary lemmas, and we refer to external sources for their proofs. In Section~\ref{sec:thermodynamic-limit:subsec:belief-propagation}, we explain the concept of belief propagation, that lays the foundation of the proof of Theorem~\ref{thm:thermodynamic-limit:pressure}. In Section~\ref{sec:thermodynamic-limit:subsec:convergence:pressure}, we prove Theorem~\ref{thm:thermodynamic-limit:pressure}. Next, in Section~\ref{sec:thermodynamic-limit:subsec:convergence:thermodynamic-quantity}, we prove Theorems~\ref{thm:thermodynamic-limit:pressure} and \ref{thm:limit:thermodynamic-quantities} as a consequence of local convergence and Theorem~\ref{thm:thermodynamic-limit:pressure}. Finally, in Section~\ref{sec:inv:critical-temperature}, we prove Theorem~\ref{thm:inv:critical-temp:PA}.


\section{Thermodynamic limits on the preferential attachment models}\label{sec:thermodynamic:limit}

In this section, we prove the existence  of the thermodynamic limits of Ising quantities, such as pressure per particle, magnetisation, and internal energy. We obtain the thermodynamic limit of the pressure using the \emph{belief propagation} method from \cite{DMS13}. By applying \cite[Theorem~1.9]{DMS13}, we establish Theorem~\ref{thm:thermodynamic-limit:pressure}, which provides the explicit form of the thermodynamic limit. Next, in Section~\ref{sec:thermodynamic-limit:subsec:convergence:thermodynamic-quantity}, we prove Theorem~\ref{thm:limit:thermodynamic-quantities} by utilising the convexity properties of these quantities.

\subsection{Preliminary results}\label{sec:preliminary:results}
We now state several preliminary results that we frequently use in this section. These lemmas hold true under very mild conditions, and we verify that these conditions are met in our case. We do not provide proofs for these lemmas but instead cite the articles where they are proven in detail:

\medskip
\begin{lemma}[GKS inequality]\label{lem:GKS}
	Consider two Ising measures $\mu$ and $\mu^\prime$ on graphs $G = (V, E)$ and $G^\prime= (V, E^\prime)$, with inverse temperatures $\beta$ and $\beta^\prime$, and external fields $\underline{B}$ and $\underline{B}^\prime$, respectively. If $E\subseteq E^\prime$, $\beta \leq \beta^\prime$, and $0 \leq B_i \leq B^\prime_i$ for all $i \in V$, then, for any $U \subset V$,
	\begin{equation}\label{eq:l1em:GKS}
		0\leq \left\langle{\prod\limits_{i\in U}\sigma_i}\right\rangle_{\mu}\leq \left\langle{\prod\limits_{i\in U}\sigma_i}\right\rangle_{\mu^\prime}.
	\end{equation}
\end{lemma}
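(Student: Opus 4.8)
The plan is to derive the statement from the two classical Griffiths--Kelly--Sherman inequalities, applied to an interpolation between $\mu$ and $\mu^\prime$. Write $\sigma_A:=\prod_{i\in A}\sigma_i$ for $A\subseteq V$. Both $\mu$ and $\mu^\prime$ are Gibbs measures of the form $\nu(\sigma)\propto\exp\{\sum_A J_A\sigma_A\}$, where $A$ ranges over singletons and pairs of $V$: for $\mu$ one has $J_{\{u,v\}}=\beta\,\mathbf 1[\{u,v\}\in E]$ and $J_{\{v\}}=B_v$, while for $\mu^\prime$ one has $J^\prime_{\{u,v\}}=\beta^\prime\,\mathbf 1[\{u,v\}\in E^\prime]$ and $J^\prime_{\{v\}}=B^\prime_v$. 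The hypotheses $E\subseteq E^\prime$, $\beta\leq\beta^\prime$ and $0\leq B_v\leq B^\prime_v$ are then precisely the statements that $0\leq J_A\leq J^\prime_A$ for every $A$. Hence it suffices to prove, for any family of nonnegative couplings $(J_A)_A$ and any $U\subseteq V$, that (i) $\langle\sigma_U\rangle_{\nu}\geq 0$ and (ii) $\langle\sigma_U\rangle_\nu$ is nondecreasing in each $J_A$; applying (ii) along the segment $J_A(t)=(1-t)J_A+tJ^\prime_A$, $t\in[0,1]$ (which stays in the nonnegative orthant and is coordinatewise nondecreasing), together with (i) at $t=0$, gives $0\leq\langle\sigma_U\rangle_\mu\leq\langle\sigma_U\rangle_{\mu^\prime}$.

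For (i) I would Taylor expand: $Z_\nu\langle\sigma_U\rangle_\nu=\sum_\sigma\sigma_U\prod_A \e^{J_A\sigma_A}=\sum_{(n_A)}\big(\prod_A J_A^{n_A}/n_A!\big)\sum_\sigma\sigma_U\prod_A\sigma_A^{n_A}$, the rearrangement being justified since the sum over spins is finite and each exponential series converges absolutely. The spin-sum $\sum_\sigma\prod_i\sigma_i^{\ell_i}$ equals $2^{|V|}$ if all exponents $\ell_i$ are even and $0$ otherwise, hence is $\geq 0$; since all $J_A\geq 0$, every term is $\geq 0$, and $Z_\nu>0$, so $\langle\sigma_U\rangle_\nu\geq 0$. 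For (ii), differentiating gives $\partial_{J_A}\langle\sigma_U\rangle_\nu=\langle\sigma_U\sigma_A\rangle_\nu-\langle\sigma_U\rangle_\nu\langle\sigma_A\rangle_\nu$, so (ii) reduces to the second Griffiths inequality $\langle\sigma_U\sigma_A\rangle_\nu\geq\langle\sigma_U\rangle_\nu\langle\sigma_A\rangle_\nu$, valid for nonnegative couplings.

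This second Griffiths inequality is the one genuinely nontrivial step, and I would prove it by the duplicated-system trick. Introduce an independent copy $\tau$ with the same law; then $\langle\sigma_U\rangle_\nu\langle\sigma_A\rangle_\nu=\langle\sigma_U\tau_A\rangle_{\nu\otimes\nu}$, so after symmetrising in $\sigma\leftrightarrow\tau$,
\[
Z_\nu^{2}\big(\langle\sigma_U\sigma_A\rangle_\nu-\langle\sigma_U\rangle_\nu\langle\sigma_A\rangle_\nu\big)=\tfrac12\sum_{\sigma,\tau}(\sigma_U-\tau_U)(\sigma_A-\tau_A)\,\e^{\sum_C J_C(\sigma_C+\tau_C)}.
\]
Expanding $\e^{\sum_C J_C(\sigma_C+\tau_C)}=\sum_{(n_C)}\big(\prod_C J_C^{n_C}/n_C!\big)\prod_C(\sigma_C+\tau_C)^{n_C}$ with nonnegative coefficients, it is enough to show $\sum_{\sigma,\tau}(\sigma_U-\tau_U)(\sigma_A-\tau_A)\prod_C(\sigma_C+\tau_C)^{n_C}\geq 0$ for each choice of $(n_C)$. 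Here I would change variables to $(\epsilon_i,\eta_i)$ via $\sigma_i=\epsilon_i$, $\tau_i=\epsilon_i\eta_i$ (a bijection of $\{-1,1\}^2$ for each $i$), under which $\sigma_C+\tau_C=\epsilon_C(1+\eta_C)$, $\sigma_U-\tau_U=\epsilon_U(1-\eta_U)$, and $\sigma_A-\tau_A=\epsilon_A(1-\eta_A)$. The sum then factorises as
\[
\Big(\sum_\epsilon\epsilon_U\epsilon_A\prod_C\epsilon_C^{n_C}\Big)\Big(\sum_\eta(1-\eta_U)(1-\eta_A)\prod_C(1+\eta_C)^{n_C}\Big),
\]
the first factor being $0$ or $2^{|V|}$ (by the even-exponent argument again) and the second a sum of summands each of which is a product of factors lying in $\{0,2\}$, hence $\geq 0$. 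Therefore the whole expression is $\geq 0$, which proves the second Griffiths inequality, hence (ii), and thus the lemma.

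The main obstacle is precisely the second Griffiths inequality treated in the last paragraph — getting the symmetrisation and the $(\epsilon,\eta)$ substitution right so that the configuration sum factorises into manifestly nonnegative pieces; everything else is elementary bookkeeping. I would also remark that no probabilistic subtlety enters from the possible randomness of the graphs: the assertion is a deterministic statement conditional on $G$ and $G^\prime$, so it holds verbatim for every realisation.
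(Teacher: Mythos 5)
Your proof is correct, but there is nothing in the paper to compare it against: the authors deliberately do not prove Lemma~\ref{lem:GKS}, stating that they ``do not provide proofs for these lemmas but instead cite the articles where they are proven in detail'' and referring to \cite{G67} for the restricted case and \cite{KS68} for the generalisation. What you have written is essentially the classical Griffiths--Kelly--Sherman argument in its modern (Ginibre-style) form: reduce both measures to ferromagnetic Gibbs measures with couplings $0\leq J_A\leq J_A^\prime$, get the lower bound from the first Griffiths inequality by power-series expansion and the even/odd spin-sum observation, and get monotonicity by differentiating in each coupling and invoking the second Griffiths inequality, which you prove by the duplicate-system symmetrisation and the substitution $\sigma_i=\epsilon_i$, $\tau_i=\epsilon_i\eta_i$ that factorises the configuration sum into manifestly nonnegative pieces. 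All steps check out, including the covariance formula $\partial_{J_A}\langle\sigma_U\rangle_\nu=\langle\sigma_U\sigma_A\rangle_\nu-\langle\sigma_U\rangle_\nu\langle\sigma_A\rangle_\nu$ and the interpolation along the segment between the two coupling vectors; the only implicit hypothesis is $\beta\geq 0$, which is needed for $J_{\{u,v\}}\geq 0$ and is indeed the paper's standing convention for the Ising model, so it is worth stating explicitly. In short, you have supplied a self-contained proof of a result the paper imports from the literature, and your argument coincides with the standard one in the cited references rather than with any argument in the paper itself.
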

In \cite{G67}, this result is proved in a restricted settings and \cite{KS68} generalised this result. 
The following lemma simplifies the computation of the Ising measure on a tree by reducing it to the computation of Ising measures on subtrees:

\medskip
\begin{lemma}[Tree pruning]\label{lem:tree-pruning}
	For $U$ a subtree of a finite tree $T$, let $\partial U$ be the subset of vertices in $U$ that connect to a vertex in $W \equiv T \setminus U$. Denote by $\langle \sigma_u \rangle_{\mu_{W,u}}$ the magnetisation of vertex $u \in \partial U$ in the Ising model on $W \cup \{u\}$. Then, the marginal Ising measure on $U$, $\mu_U^{T}$, is equivalent to the Ising measure on $U$ with the magnetic fields
	\begin{equation}\label{eq:lem:tree-pruning}
		B_u^\prime =\begin{cases}
			\arctanh\big( \langle \sigma_u \rangle_{\mu_{W,u}} \big), & u \in \partial U,\\
			B_u, & u \in U \setminus \partial U.
		\end{cases}
	\end{equation}
\end{lemma}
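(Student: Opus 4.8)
The plan is to exploit the Markov (Gibbs) property of the Ising measure on a tree together with a one-variable algebraic identity for summing out a leaf. First I would reduce to the case where $W = T \setminus U$ is connected to $U$ through a single vertex; more precisely, since $T$ is a tree, removing the edge set between $U$ and $W$ disconnects $T$ into $U$ and a forest of components of $W$, each of which attaches to $U$ at exactly one vertex of $\partial U$. By iterating the argument one attachment point at a time, it suffices to show: if $u \in U$ and $W$ is a single subtree hanging off $u$ via the unique edge $\{u,w_0\}$ (with $w_0 \in W$), then summing the Boltzmann weight $\exp\{\beta \sum_{\{x,y\} \in E(W \cup \{u\})} \sigma_x \sigma_y + \sum_{x \in W} B_x \sigma_x\}$ over all $\boldsymbol{\sigma}_W \in \{-1,+1\}^{W}$ produces, up to a $\sigma_u$-independent multiplicative constant, a factor of the form $\exp\{B_u' \sigma_u\}$ with $B_u' = \arctanh(\langle \sigma_u \rangle_{\mu_{W,u}})$.

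The key computation is the following. Write $g(\sigma_u) := \sum_{\boldsymbol{\sigma}_W} \exp\{\beta \sum_{\{x,y\}} \sigma_x\sigma_y + \sum_{x\in W} B_x \sigma_x\}$, where the edge sum runs over all edges of $W\cup\{u\}$ (note the edge $\{u,w_0\}$ contributes $\beta\sigma_u\sigma_{w_0}$). Since $\sigma_u \in \{\pm1\}$, the function $g$ is determined by the two numbers $g(+1)$ and $g(-1)$, and any strictly positive function on $\{\pm1\}$ can be written as $g(\sigma_u) = C \e^{B_u'\sigma_u}$ with $C = \sqrt{g(+1)g(-1)}$ and $B_u' = \tfrac12 \log\{g(+1)/g(-1)\}$. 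It remains to identify $\tfrac12\log\{g(+1)/g(-1)\}$ with $\arctanh(\langle\sigma_u\rangle_{\mu_{W,u}})$. For this, observe that $\mu_{W,u}$ is exactly the Ising measure on $W\cup\{u\}$ with fields $B_x$ on $W$ and $B_u$ on $u$; its partition function is $Z_{W,u} = g(+1) + g(-1)$ (here I am using that the field term $B_u\sigma_u$ on $u$ is also part of the Boltzmann weight — if one prefers $g$ to exclude $B_u\sigma_u$, absorb it into $B_u'$ at the end, which only shifts $B_u'$ by $B_u$ and matches \eqref{eq:lem:tree-pruning} since $u \in \partial U$ gets the $\arctanh$ value that already incorporates its own field). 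Then
\[
\langle \sigma_u \rangle_{\mu_{W,u}} = \frac{g(+1) - g(-1)}{g(+1) + g(-1)},
\]
so that $\arctanh(\langle\sigma_u\rangle_{\mu_{W,u}}) = \tfrac12\log\frac{1 + \langle\sigma_u\rangle}{1 - \langle\sigma_u\rangle} = \tfrac12 \log\frac{g(+1)}{g(-1)} = B_u'$, as claimed.

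Having established this for one pruned subtree, I would assemble the general statement. Factor the full Boltzmann weight on $T$ as the product of the weight on $U$ (edges and fields inside $U$) times, for each connected component $W^{(j)}$ of $T\setminus U$ attached at $u_j \in \partial U$, the weight $\exp\{\beta\sum_{\text{edges of } W^{(j)}\cup\{u_j\}}\sigma\sigma + \sum_{x\in W^{(j)}} B_x\sigma_x\}$. Summing $\boldsymbol{\sigma}_{T\setminus U}$ factorizes over the components (they are vertex-disjoint and share no edges), and each factor collapses by the leaf computation above to $C_j \e^{B_{u_j}'\sigma_{u_j}}$. The constants $C_j$ do not depend on $\boldsymbol{\sigma}_U$, hence cancel in the normalization, and we are left with precisely the Ising measure on $U$ with the modified fields $B_u' = \arctanh(\langle\sigma_u\rangle_{\mu_{W,u}})$ for $u\in\partial U$ (where $W$ here denotes the union of components attached at $u$; if several components attach at the same $u$, their $\arctanh$ contributions simply add, which is consistent with $\mu_{W,u}$ being defined on all of $W\cup\{u\}$) and $B_u' = B_u$ for $u \in U\setminus\partial U$.

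\textbf{Main obstacle.} The genuinely content-bearing step is the single-leaf collapse identity $g(\sigma_u) = C\e^{B_u'\sigma_u}$ with the explicit $B_u'$; everything else is bookkeeping about how a tree decomposes when an edge cut-set is removed. The one subtlety to handle carefully is the treatment of the field $B_u$ on the boundary vertex $u$ itself: whether it is included in the "pruned" partial sum or kept in $U$. The cleanest convention — and the one matching the lemma's statement, since $\partial U \ni u$ receives the value $\arctanh(\langle\sigma_u\rangle_{\mu_{W,u}})$ rather than $B_u + \arctanh(\cdots)$ — is to let $\langle\sigma_u\rangle_{\mu_{W,u}}$ be the magnetization in the model on $W\cup\{u\}$ that already carries the original field $B_u$ at $u$, and to have the pruning \emph{replace} $B_u$ by this $\arctanh$ value; I would state this explicitly to avoid an off-by-$B_u$ error. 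A second minor point: positivity of $g$ (needed to take logs) is immediate since it is a sum of strictly positive exponentials, and the whole identity is insensitive to $\beta = 0$ or vanishing fields.
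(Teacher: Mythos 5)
Your proof is correct: the reduction to components of $T\setminus U$ (each of which, since $T$ is a tree and $U$ is connected, attaches to $U$ through exactly one edge), the two-value collapse $g(\sigma_u)=C\e^{B_u'\sigma_u}$ with $B_u'=\tfrac12\log\{g(+1)/g(-1)\}=\arctanh\big(\langle\sigma_u\rangle_{\mu_{W,u}}\big)$, and the cancellation of the $\sigma_U$-independent constants in the normalisation constitute precisely the standard argument, and you correctly resolve the one real subtlety, namely that $\arctanh\big(\langle\sigma_u\rangle_{\mu_{W,u}}\big)$ already incorporates the original field $B_u$, so the pruned field replaces it rather than being added to it. The paper itself offers no proof of this lemma and instead defers to \cite{DM10,DGvdH10,RvdHSF}; your argument is essentially the one found in those references.
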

\noindent For more details, see \cite{DM10,DGvdH10,RvdHSF}.

\subsection{Belief propagation on trees}\label{sec:thermodynamic-limit:subsec:belief-propagation}
For a rooted tree $\Tree$ and $t\in\mathbb{N}$, let $\Tree_t$ denote the tree $\Tree$ pruned at height $t$, and let $\nu_\Tree^{t,+}$ denote the root marginal for the Ising model on $\Tree_t$ with $+$ boundary condition, i.e.\, all the leaves in \(\Tree_t\) have spin \(+1\). Similarly, we define $\nu_\Tree^{t,f}$ as the root marginal for the Ising model on $\Tree_t$ with free boundary condition, i.e.\, leaf-spins are i.i.d.\, \(\Unif\{-1,+1\}\). From \cite[Lemma~4.1]{DMS13}, we know that the limits $\nu_\Tree^{t,+}$ and $\nu_\Tree^{t,f}$ exist as $t\to\infty$, and we denote these limiting distributions as $\nu_\Tree^{+}$ and $\nu_\Tree^{f}$, respectively.

Let $\Tree_{x\to y}$ denote the subtree of $\Tree$ obtained by deleting the edge $\{x , y\}$ and rooting it at $x$. For a P\'{o}lya point tree $\Tree$ and $j\in[D(\emp)]$, $\Tree_{\emp\to\emp j}$ represents the subtree of the P\'{o}lya point tree rooted at $\emp$ with the component hanging from $\emp j$ deleted (including $\emp j$). On the other hand, $\Tree_{\emp j\to\emp }$ is a P\'{o}lya point tree rooted at $\emp j$. Essentially, for any $j\in[D(\emp)]$,
\eqn{\label{eq:BP:def:1}
	\Tree=\Tree_{\emp \to \emp j}\cupdot \Tree_{\emp j\to\emp}~,
}
where $\cupdot$ represents the disjoint union operator. We now define the following root marginals:
\eqn{\label{eq:root-marginal}
	\nu_{\emp\to\emp j}^{\dagger}\equiv \nu_{\Tree_{\emp\to\emp j}}^{\dagger}\quad\text{and}\quad\nu_{\emp j\to\emp}^{\dagger}\equiv \nu_{\Tree_{\emp j\to\emp}}^{\dagger}~,
}
for $\dagger\in\{+,f\}$.
By \cite[Lemma~5.15]{RvdHSF},
\[
\nu_\Tree^{+}=\nu_\Tree^{f},~\quad\mu~\text{almost surely}.
\]
Let us denote $\magn{\to\emp j}=2\nu_{\emp\to\emp j}^{+}(+1)-1$, and $\magn{\emp j\to}=2\nu_{\emp j\to\emp}^{+}(+1)-1$. Note that $\magn{\to \emp j}$ is the root magnetisation in $\Tree_{\emp\to\emp j}$. Similarly, $\magn{ \emp j\to }$ is the root magnetisation in the P\'{o}lya point tree $\Tree_{\emp j\to\emp}$.

\medskip
The root magnetisation of any locally-finite tree satisfies the following fixed point equation:

\medskip
\begin{lemma}\label{lem:h:magnetization}
	Let $\Tree$ be a multi-type branching process with a general type-space $\Sbold$, and assume that $\Tree$ is almost surely locally finite. Furthermore, label the nodes of $\Tree$ using the Ulam-Harris notation. Consider a distributional functional $\h$ as a fixed point solution to the following recursion:
	\begin{equation}\label{eq:test:1}
		\h(\bff(\omega)) = B + \sum\limits_{i=1}^{D(\omega)} \arctanh\Big( \tanh(\beta)\tanh\big( \h(\bff(\omega i)) \big) \Big),
	\end{equation}
	where $D(\omega)$ is the degree of the node $\omega$ in $\Tree$, for some $B > 0$ and for all $\omega \in \Sbold$ and $\ell \geq 0$. Then, the root magnetisation  $\magn{\omega\to}$ of $\Tree$, rooted at $\omega$, under the inverse temperature $\beta$ and external magnetic field $B > 0$, for all $\bff(\omega) \in \Sbold$, satisfies
	\begin{equation}\label{eq:prop:root-magnetization:1}
		\h(\bff(\omega)) = \magn{\omega\to},\quad \text{a.s.}
	\end{equation}
\end{lemma}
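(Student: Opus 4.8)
## Proof Plan for Lemma~\ref{lem:h:magnetization}

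The plan is to establish the fixed point identity $\h(\bff(\omega)) = \magn{\omega\to}$ by a finite-tree approximation combined with the tree pruning lemma (Lemma~\ref{lem:tree-pruning}). First I would fix a node $\omega$ and consider, for each $t \geq 1$, the truncated tree $\Tree_t$ (rooted at $\omega$, pruned at height $t$) with free boundary condition, whose root magnetisation I will denote $\magn{\omega\to}^{(t)}$. Since $\Tree$ is almost surely locally finite, each $\Tree_t$ is a finite tree, and by \cite[Lemma~4.1]{DMS13} together with the coincidence $\nu_\Tree^{+} = \nu_\Tree^{f}$ from \cite[Lemma~5.15]{RvdHSF} we have $\magn{\omega\to}^{(t)} \to \magn{\omega\to}$ almost surely as $t \to \infty$. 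So it suffices to show that $\magn{\omega\to}^{(t)}$ satisfies, on each finite tree, the recursion obtained by truncating \eqref{eq:test:1} at depth $t$, and then pass to the limit.

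The core computation is the one-step recursion on a finite tree. I would apply Lemma~\ref{lem:tree-pruning} with $U$ the star consisting of the root $\omega$ together with its children $\omega 1, \dots, \omega D(\omega)$ (equivalently, isolate the root and treat each pendant subtree $\Tree_{\omega i \to \omega}$ as the complement $W_i$). Tree pruning replaces each child $\omega i$ by an effective external field $B_{\omega i}' = \arctanh\big(\magn{\omega i \to}\big)$, where $\magn{\omega i \to}$ is the root magnetisation of the subtree hanging off $\omega i$. The Ising model on a single vertex $\omega$ with field $B$ attached to $D(\omega)$ leaves carrying fields $B_{\omega i}'$ then factorises, and a direct computation of $\langle \sigma_\omega \rangle$ for this star gives
\eqn{
	\magn{\omega\to} \;=\; \tanh\!\Big( B + \sum_{i=1}^{D(\omega)} \arctanh\big(\tanh(\beta)\tanh(B_{\omega i}')\big) \Big).
}
Substituting $B_{\omega i}' = \arctanh(\magn{\omega i \to})$ and writing $\h(\bff(\omega)) := \arctanh(\magn{\omega\to})$, this is exactly \eqref{eq:test:1}. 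Here I would use the standard edge-contraction identity for the Ising model: a leaf with field $B'$ connected by an edge of strength $\beta$ contributes a multiplicative factor whose effect is to shift the parent's field by $\arctanh(\tanh\beta \tanh B')$ — this is the algebraic heart of belief propagation and follows by summing over the leaf spin.

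To make this rigorous on the infinite tree I would argue by induction on the truncation height $t$: the recursion holds verbatim for $\magn{\omega\to}^{(t)}$ with the children carrying the depth-$(t-1)$ magnetisations of their subtrees, the base case $t=1$ being a single vertex with field $B$ whose magnetisation is $\tanh B$. Monotone convergence of the truncated magnetisations (guaranteed by the GKS inequality, Lemma~\ref{lem:GKS}, which gives monotonicity in the boundary condition, hence convergence of $\magn{\omega\to}^{(t)}$) lets me pass $t \to \infty$ inside the finite sum $\sum_{i=1}^{D(\omega)}$ — finiteness of $D(\omega)$ is where almost-sure local finiteness is used — and the continuity of $\arctanh$, $\tanh$ on the relevant ranges closes the argument. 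The main obstacle is the interchange of the limit $t\to\infty$ with the (random, but almost surely finite) sum over children and the verification that the existence/coincidence results of \cite{DMS13,RvdHSF} apply to the general type-space $\Sbold$ setting; once local finiteness is in hand this is routine, since the number of summands stabilises almost surely and each summand converges. I would also note that $B>0$ ensures all magnetisations are strictly between $-1$ and $1$, so $\arctanh$ is applied only in its domain and $\h$ is well defined.
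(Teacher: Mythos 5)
Your proposal follows essentially the same route as the paper: truncate the tree at height $t$, use Lemma~\ref{lem:tree-pruning} to derive the one-step belief-propagation recursion for the root magnetisation of the finite tree, invoke GKS for monotonicity in $t$, and pass to the limit using local finiteness and the coincidence of the $+$ and free boundary limits. The one point to make explicit is that the lemma asserts equality for \emph{any} fixed point $\h$ of \eqref{eq:test:1}, so you need uniqueness of the fixed point, which the paper obtains by running \emph{both} the free-boundary iteration (initialised at $B$, increasing) and the plus-boundary iteration (initialised at $\infty$, decreasing) and squeezing them together via \cite[Lemma~3.1]{DGvdH10} — your argument contains the ingredients for this but should state the two-sided sandwich rather than working only with the free boundary condition.
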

\medskip
By the definition of the P\'{o}lya point tree, it can be shown to be locally finite. Then, Lemma~\ref{lem:h:magnetization} proves that for the P\'{o}lya point tree,
\begin{equation}\label{eq:test:2}
	\h(\bff(\emp j)) = \magn{\emp j\to},\quad \text{for all}~j\in [D(\emp)]~.
\end{equation}
Further, for any $j\in[D(\emp)]$, let $\h_{\sss -j}(\bff(\emp))$ be the fixed point solution to the recursion relation in \eqref{eq:test:1} for $\Tree_{\emp\to\emp j}$. Since $\Tree$ is locally finite, and $\Tree_{\emp\to\emp j}$ is a subgraph of $\Tree,~\Tree_{\emp\to\emp j}$ as well. Therefore, by Lemma~\ref{lem:h:magnetization},
\begin{equation}\label{eq:test:3}
	\h_{\sss -j}(\bff(\emp)) = \magn{\to\emp j}~.
\end{equation}

The fact that \eqref{eq:prop:root-magnetization:1} is a fixed point solution to \eqref{eq:distributional-recursion:1} follows directly from \cite[Proof of Proposition~5.13]{RvdHSF}. The other cases follow in a similar manner. Since this proof is a minor adaptation of \cite{RvdHSF} to our settings, we defer this proof to {Appendix~\ref{appendix:results}}.

\subsection{Proof of Theorem~\ref{thm:thermodynamic-limit:pressure}}\label{sec:thermodynamic-limit:subsec:convergence:pressure}
With Lemma~\ref{lem:h:magnetization} established, we now start with the proof of Theorem~\ref{thm:thermodynamic-limit:pressure}. We use the line of proof as used in \cite[Proof of Theorem~1.9]{DMS13}. We first sketch the proof outline below. 
\paragraph{\bf Overview of the proof}
By the fundamental theorem of calculus, for any \(B_1>B_0\),
\eqn{\label{eq:proof:outline:1}
	\lim\limits_{n\to\infty} \Big[ \psi_n(\beta,B_1)-\psi_n(\beta,B_0) \Big] =\lim\limits_{n\to\infty}\int\limits_{B_0}^{B_1}\frac{\partial}{\partial B}\psi_n(\beta,B)\sdv{B}~.
}
Further, differentiating \(\psi_n(\beta,B)\) with respect to \(B\), we obtain
\eqn{\label{eq:proof:outline:2}
	\pdv{}{B}\psi_n(\beta,B)=\frac{1}{n}\sum\limits_{v\in[n]} \anglbrkt{\sigma_u}_{\mu_n}=\E\Big[ \anglbrkt{\sigma_{o_n}}_{\mu_n}\mid G_n \Big]~,
}
where \(o_n\) is a \emph{uniformly chosen} vertex from the vertex set \([n]\), and the expectation is taken with respect to the law of \(o_n\), while \(\mu_n\) denotes the Boltzmann distribution on \(G_n\). By the GKS inequality in Lemma~\ref{lem:GKS}, for every \(\ell>0\),
\eqn{\label{eq:proof:outline:3}
	\anglbrkt{\sigma_{o_n}}_{B_{o_n}(\ell)}^{f}\leq \anglbrkt{\sigma_{o_n}}_{\mu_n} \leq \anglbrkt{\sigma_{o_n}}_{B_{o_n}(\ell)}^{+}~,
}
where \(\anglbrkt{\sigma_{o_n}}_{B_{o_n}(\ell)}^{+/f}\) is the root magnetization in the Ising model on \(B_{o_n}(\ell)\) with \(+/{\rm{free}}\) boundary conditions on \(\nbhd{B_{o_{n}}(\ell)}\). By the local convergence of preferential attachment models to the P\'{o}lya point tree \cite{BergerBorgs,GHHR22}, we obtain that for any \(\ell\in\N\) fixed, the distribution of \(B_{o_n}(\ell)\) converges in probability to that of \(B_\emp(\ell)\), where \(B_\emp(\ell)\) is the P\'{o}lya point tree rooted at \(\emp\), explored up to and including \(\ell\)-th generation. Therefore,
\eqn{\label{eq:proof:outline:4}
	\E\Big[\anglbrkt{\sigma_{o_n}}_{B_{o_n}(\ell)}^{+/f}\mid G_n\Big] \overset{\prob}{\longrightarrow} \E\Big[\anglbrkt{\sigma_{\emp}}_{B_{\emp}(\ell)}^{+/f}\Big]~.
}
Here the expectation in the RHS of \eqref{eq:proof:outline:4} is with respect to the randomness of the P\'{o}lya point tree. The remainder of the proof follows in three steps. First, \cite[Lemma~3.1]{DGvdH10} proves that both \(\anglbrkt{\sigma_{o_n}}_{B_{o_n}(\ell)}^+\) and \(\anglbrkt{\sigma_{o_n}}_{B_{o_n}(\ell)}^f\) converge to the \emph{same} limit as \(\ell\to\infty\).
Next, we identify this limiting quantity as \(\pdv{}{B}\bar\varphi(\beta,B)\), and by \eqref{eq:proof:outline:4}, \(\pdv{}{B}\bar\varphi(\beta,B)\) is bounded, by the following proposition:

\medskip
\begin{Proposition}[Magnetisation limit]\label{prop:mag:lim}
	For all \(\beta,B>0\),
	\eqn{\label{eq:prop:mag:lim}
		\pdv{}{B}\psi_n(\beta,B)\overset{\prob}{\longrightarrow}\E\Big[ \anglbrkt{\sigma_\emp}_\mu \Big] = \pdv{}{B}\bar\varphi(\beta,B)~,
	}
	where \(\bar\varphi(\beta,B)\) is defined as
	\begin{align*}
			\bar\varphi(\beta,B) = &\ \mathbb{E}_{\mu}\left[ \log\left\{ \sum\limits_{\sigma\in\{-1,1\}} \exp\left(B\sigma\right) \prod\limits_{j=1}^{D(\emp)} \left( \sum\limits_{\sigma_j\in\{-1,1\}} \exp\left(\beta\sigma\sigma_j\right) \nu_{\emp j\to\emp}(\sigma_j) \right) \right\}\right.\nonumber \\
		&\hspace{1.75cm}\left.-\frac{1}{2}\sum\limits_{j=1}^{D(\emp)} \log\left\{ \sum\limits_{\sigma,\sigma_j\in\{-1,1\}} \exp\left(\beta\sigma\sigma_j\right) \nu_{\emp j\to\emp}(\sigma_j) \nu_{\emp\to\emp j}(\sigma) \right\} \right]~.
	\end{align*}
\end{Proposition}
By the dominated convergence theorem, we can interchange the limit and integral in \eqref{eq:proof:outline:1}, so that we perform integration to obtain
\eqn{\label{eq:proof:outline:5}
\lim\limits_{n\to\infty}\psi_n(\beta,B_1)-\psi_n(\beta,B_0) = \bar\varphi(\beta,B_1)-\bar\varphi(\beta,B_0)~.
}
Next, we prove the following proposition, identifying the explicit formula for \(\bar\varphi(\beta,B)\) as the RHS of \eqref{eq:thm:thermodynamic-limit:pressure:explicit}.

\medskip
\begin{Proposition}[Identifying explicit \(\bar\varphi(\beta,B)\)]\label{prop:explicit:pressure}
	For any \(\beta,B\geq 0\),
	\[
		\bar\varphi(\beta,B) =\varphi(\beta,B)~,
	\]
	where \(\varphi(\beta,B) \) is as defined in \eqref{eq:thm:thermodynamic-limit:pressure:explicit}.
\end{Proposition}
Lastly, in the third step, we take the limit \(B\to\infty\), for which we show that 
\eqn{\label{eq:proof:outline:6}
\varphi(\beta,B)-B\to \frac{\E[D(\emp)]}{2}\beta,
}
and \(\psi_n(\beta,B)-B\) can be uniformly-well approximated by \({\E[D(\emp)]}\beta/2\). This limiting argument requires some delicate computation.

Note that, Proposition~\ref{prop:mag:lim} and Lemma~\ref{lem:h:magnetization} in fact prove Theorem~\ref{thm:magnetization} (a). Now, we are ready to complete the proof of Theorem~\ref{thm:thermodynamic-limit:pressure}, subject to Proposition~\ref{prop:mag:lim} and \ref{prop:explicit:pressure}, and then move on to prove Proposition~\ref{prop:mag:lim} and \ref{prop:explicit:pressure}.

\begin{proof}[Proof of Theorem~\ref{thm:thermodynamic-limit:pressure} subject to Proposition~\ref{prop:mag:lim} and \ref{prop:explicit:pressure}]
	Since \(\pdv{}{B}\psi_n(\beta,B)\) is positive and bounded above by \(1\), we apply Proposition~\ref{prop:explicit:pressure}, and dominated convergence theorem to \eqref{eq:proof:outline:1} to obtain, for any \(B_1>B_0\),
	\eqn{\label{eq:proof:pressure:1}
		\lim\limits_{n\to\infty} \psi_n(\beta,B_1)-\psi_n(\beta,B_0) = \varphi(\beta,B_1)-\varphi(\beta,B_0)~.
	} 
	Now, we would like to take \(B_1\to\infty\). Note that, by \eqref{eq:thm:thermodynamic-limit:pressure:explicit},
	\eqan{\label{eq:proof:pressure:2}
		&\varphi(\beta,B)-B\\
		&\hspace{0.5cm}= \frac{\E[D(\emp)]}{2}\log \cosh(\beta)-\frac{{\E[D(\emp)]}}{2}\E\Big[\log\big\{1+\tanh(\beta) \tanh(\h(\bff(\hat{\emp} 1)))\tanh(\h(\bff(\hat{\emp}))) \big\}\Big]\nn\\
		&\hspace{4cm}+\E\Big[ \log\Big( \prod\limits_{i=1}^{D(\emp)}\big\{1+\tanh(\beta)\tanh(\h(\bff(\emp i)))\big\}\nn\\
		&\hspace{6cm}+\e^{-2B}\prod\limits_{i=1}^{D(\emp)}\big\{1-\tanh(\beta)\tanh(\h(\bff(\emp i)))\big\} \Big) \Big]~.\nn	
	}
	From the definition of the P\'{o}lya point tree, we obtain that \(\E[D(\emp)]=2m\). Thus, as \(B\to\infty\),
	\eqan{\label{eq:proof:pressure:3}
		&\varphi(\beta,B)-B\\
		&\hspace{0.5cm}=~ m\log \cosh(\beta)-m\E\Big[\log\big\{1+\tanh(\beta) \tanh(\h(\bff(\hat{\emp} 1)))\tanh(\h(\bff(\hat{\emp}))) \big\}\Big]\nn\\
		&\hspace{4cm}+\E\Big[ \log\Big( \prod\limits_{i=1}^{2m}\big\{1+\tanh(\beta)\tanh(\h(\bff(\emp i)))\big\} \Big) \Big]+o_{\sss \prob}(1)~.\nn	
	}
	Since \(\{\h(\omega):\omega\in\Scal\}\) satisfies the fixed-point equation in \eqref{eq:test:1}, these random variables diverge to \(\infty\) as \(B\to\infty\). Therefore, \eqref{eq:proof:pressure:3} can be further simplified as
	\eqn{\label{eq:proof:pressure:4}
		\varphi(\beta,B)-B=m\log \cosh(\beta) + m\log(1+\tanh(\beta))+o_{\prob}(1)=m\beta+o_{\sss \prob}(1)~,
	}
	where the last equality follows from the fact that \(\cosh(\beta)(1+\tanh(\beta))=\e^\beta\). Thus, choosing \(B_2\) arbitrarily large, we can make \(\sup_{B>B_2}\mid \varphi(\beta,B)-B-m\beta \mid\) arbitrarily small.
	
	Now, we prove a similar estimate for \(\psi_n(\beta,B)\). Note that, by considering 
	\((\sigma_v)_{v\in[n]}=\{1\}_{v\in[n]}\),
	\eqn{\label{eq:proof:pressure:5}
		Z_n(\beta,B)\geq \e^{nB+|E(G_n)|\beta}~,
	}
	so that
	\eqn{\label{eq:proof:pressure:6}
		\psi_n(\beta,B)-B\geq \frac{|E(G_n)|}{n}\beta \to m\beta ~.
	}
	On the other hand,
	\eqn{\label{eq:proof:pressure:7}
		\sum\limits_{\{u,v\}\in E(G_n)} \sigma_u\sigma_v \leq |E(G_n)|~.
	}
	Thus, we can upper bound \(Z_n(\beta,B)\) as
	\eqn{\label{eq:proof:pressure:8}
		Z_n(\beta,B)\leq \e^{nB+|E(G_n)|\beta}\sum\limits_{\sigma\in\{-1,1\}^n} \prod\limits_{u\in [n]} \e^{-2B\one_{\{\sigma_u=-1\}}}=\e^{nB+|E(G_n)|\beta}(1+\e^{-2B})^n~.
	}
	Therefore, from \eqref{eq:proof:pressure:6} and \eqref{eq:proof:pressure:8}, we conclude that 
	\eqn{\label{eq:proof:pressure:9}
		0\leq \psi_n(\beta,B)\leq-B-m\beta \log(1+\e^{-2B})~.
	}
	Thus, choosing \(B_3\) sufficiently large, we can also make \(\sup_{B>B_3}\mid \psi_n(\beta,B)-B-m\beta \mid\) arbitrarily small. Choosing \(B_1=\max\{B_2,B_3\}\)
	\eqan{\label{eq:proof:pressure:10}
		\psi_n(\beta,B)=&\varphi(\beta,B)-\varphi(\beta,B_1)+\psi_n(\beta,B)+o(1)\nn\\
		=&\varphi(\beta,B)-\big[ \varphi(\beta,B_1)-B_1-m\beta \big]+\big[ \psi_n(\beta,B)-B_1-m\beta \big]+o(1)~,
	}
	and then letting \(B_1\to \infty\), yields
	\eqn{\label{eq:proof:pressure:11}
		\psi_n(\beta,B)\overset{\prob}{\longrightarrow}\varphi(\beta,B)~,
	}
	completing the proof.
\end{proof}

Now, we move on to prove Proposition~\ref{prop:mag:lim}. The convergence follows immediately from the local convergence of preferential attachment models to the P\'{o}lya point tree, whereas the equality in \eqref{eq:prop:mag:lim} follows from \cite{DMS13} for all locally tree-like graphs.

\begin{proof}[Proof of Proposition~\ref{prop:mag:lim}]
	Now, we remain to prove Proposition~\ref{prop:mag:lim}. By \cite[Theorem~1.9 and Theorem~1.16]{DMS13},
	\eqn{\label{for:prop:new:1}
		\lim\limits_{n\to\infty} \big[ \psi_n(\beta,B_1)- \psi_n(\beta,B_0) \big] = \bar\varphi(\beta,B_1)- \bar\varphi(\beta,B_0)~,
	}
	where by \cite[(1.6)-(1.8)]{DMS13}, \(\bar\varphi(\beta,B)\) is defined as
	\eqan{\label{for:prop:new:2}
		\bar\varphi(\beta,B) = &\ \mathbb{E}_{\mu}\left[ \log\left\{ \sum\limits_{\sigma\in\{-1,1\}} \exp\left(B\sigma\right) \prod\limits_{j=1}^{D(\emp)} \left( \sum\limits_{\sigma_j\in\{-1,1\}} \exp\left(\beta\sigma\sigma_j\right) \nu_{\emp j\to\emp}(\sigma_j) \right) \right\}\right.\nonumber \\
		&\hspace{1.75cm}\left.-\frac{1}{2}\sum\limits_{j=1}^{D(\emp)} \log\left\{ \sum\limits_{\sigma,\sigma_j\in\{-1,1\}} \exp\left(\beta\sigma\sigma_j\right) \nu_{\emp j\to\emp}(\sigma_j) \nu_{\emp\to\emp j}(\sigma) \right\} \right]~.}
	
	By \cite[Lemma~2.1]{DMS13}, \(\psi_{n}(\beta,B)\) is uniformly bounded and equicontinuous sequence of functions. Furthermore, \(\pdv{}{B}\psi_{n}(\beta,B)\) is uniformly bounded. Now, by the dominated convergence theorem and the fundamental theorem of calculus, 
	\eqn{\label{for:prop:new:3}
	\frac{\partial}{\partial B}\psi_n(\beta,B)\overset{\prob}{\to}\frac{\partial}{\partial B}\bar\varphi(\beta,B)~.
	}
	
	Since for any vertex \(v\in[n],~\anglbrkt{\sigma_v}_{B_v(\ell)}^{+/f}\) is a bounded function that only depends on the $\ell$-neighbourhood, by local convergence of preferential attachment models to the P\'{o}lya point tree, we obtain
	\eqref{eq:proof:outline:4}. Next, by \cite[Lemma~3.1]{DGvdH10}, both these bounds converge to the same limit, as \(\ell\to\infty\). Therefore,
	\eqn{\label{for:prop:new:4}
		\frac{\partial}{\partial B}\bar\varphi(\beta,B) = \E \big[ \anglbrkt{\sigma_\emp}_\mu \big]
	}
	completing the proof.
\end{proof}


In Proposition~\ref{prop:explicit:pressure}, we prove the equivalence of the expressions in the RHS of \eqref{eq:thm:thermodynamic-limit:pressure:explicit} and \eqref{for:prop:new:2}.
To prove the equivalence of these two expressions for the thermodynamic limit of the pressure, we must use two distributional properties of the nodes of the P\'{o}lya point tree. 
The following lemma proves a distributional equivalence related to $\Old$-labelled children of the root in the P\'{o}lya point tree:

\medskip
\begin{lemma}[Distributional equivalence of $\Old$-labelled children]
	\label{lem:old:distributional:equivalence}
	Let $\bff(\hat{\emp})=(U,\Young)$, where $U\sim\Unif[0,1]$. Then, for any $i\in[m]$,
	\eqn{\label{eq:lem:old:distributional:equivalence}
		\big( \h_{-i}(\bff(\emp)),\h(\bff(\emp i)) \big)\overset{d}{=}\big( \h(\bff(\hat{\emp})),\h(\bff(\hat{\emp} 1)) \big)~.}
\end{lemma}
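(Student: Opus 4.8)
The plan is to establish the distributional identity in \eqref{eq:lem:old:distributional:equivalence} by comparing, on both sides, the subtree whose root magnetisation generates the relevant $\h$-value, and showing these subtrees agree in distribution as marked trees. On the right-hand side, $\hat{\emp}=(U,\Young)$ is a $\Young$-labelled node with $U\sim\Unif[0,1]$; $\h(\bff(\hat{\emp}))$ is the root magnetisation of the P\'olya point tree rooted at $\hat\emp$ (cf.\ Lemma~\ref{lem:h:magnetization} and \eqref{eq:test:2}), and $\h(\bff(\hat\emp 1))$ is the root magnetisation of the subtree hanging from the first child of $\hat\emp$, which (by the exploration rule) is an $\Old$-labelled child. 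So the right-hand pair is "(root magnetisation of a $\Young$-rooted PPT, root magnetisation of the subtree below one of its $\Old$ children)". On the left-hand side, fix $i\in[m]$: by the construction of the PPT, $\emp i$ is one of the first $m_-(\emp)=m$ children of the root, hence has label $\Old$, and $\h(\bff(\emp i))=\magn{\emp i\to}$ is the root magnetisation of the PPT rooted at $\emp i$. Meanwhile $\h_{-i}(\bff(\emp))=\magn{\to\emp i}$ (by \eqref{eq:test:3}) is the root magnetisation of $\Tree_{\emp\to\emp i}$, the tree rooted at $\emp$ with the branch through $\emp i$ removed.

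The key step is to identify the law of the pair of marked trees $\big(\Tree_{\emp\to\emp i},\ \Tree_{\emp i\to\emp}\big)$ and match it to $\big(\Tree_{\hat\emp}\text{-minus-the-}\emp 1\text{-branch},\ \Tree_{\hat\emp 1\to\hat\emp}\big)$ after the edge $\{\hat\emp,\hat\emp 1\}$ is cut. First I would record that, conditionally on $A_\emp=a$, the $i$-th old child has age $A_{\emp i}=U_i^{1/\chi}a$ with $U_i\sim\Unif[0,1]$, which is exactly the (unconditional) law of the age of a node that is younger-than-nothing, i.e.\ the relation between $A_{\emp i}$ and $A_\emp$ mirrors the relation between $A_\emp$ (uniform) and "$A_\emp$ being the $\Young$ child of a parent." More precisely, the crucial distributional fact — which I would isolate as the technical heart — is that \emph{the marked subtree $\Tree_{\emp i\to\emp}$ rooted at an $\Old$ child $\emp i$, together with the complementary tree $\Tree_{\emp\to\emp i}$ viewed from $\emp i$'s perspective (i.e.\ as the tree hanging off the parent of $\emp i$), has the same joint law as the PPT rooted at $\hat\emp$ paired with the branch below its first ($\Old$) child.} This is essentially a statement that $\emp i$, being $\Old$, is "older than its parent," so from $\emp i$'s viewpoint the parent $\emp$ is a $\Young$ neighbour of age $A_\emp$, and one checks that the intensity $\rho$, the strengths $\Gamma$, the values $m_-$, and the recursive structure all coincide with those in a PPT whose root $\hat\emp$ has label $\Young$ and one designated $\Old$ child. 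I would carry this out by unfolding one level of the exploration process on each side and invoking the recursive (branching) self-similarity of the PPT to propagate the identity to all deeper levels, using the fact that $\h^\star$ is a measurable functional of the rooted marked tree, so equality in distribution of the trees transfers to equality in distribution of the $\h$-pairs.

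Concretely the steps are: (i) rewrite both sides via Lemma~\ref{lem:h:magnetization}/\eqref{eq:test:2}/\eqref{eq:test:3} as root magnetisations of explicit rooted subtrees of the PPT; (ii) set up the edge-cut decomposition \eqref{eq:BP:def:1} at the edge $\{\emp,\emp i\}$ on the left and at $\{\hat\emp,\hat\emp 1\}$ on the right; (iii) prove the marked-tree identity $\big(\Tree_{\emp\to\emp i} \text{ rooted at } \emp i's\text{ parent},\ \Tree_{\emp i\to\emp}\big)\overset{d}{=}\big(\text{analogous decomposition of }\Tree_{\hat\emp}\big)$ by comparing ages, strengths, $m_-$-values and Poisson intensities at the first level and invoking recursive self-similarity; (iv) conclude by applying the magnetisation functional to both sides. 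The main obstacle is step (iii): one must carefully track the age/strength/label bookkeeping across the cut edge — in particular verifying that conditioning on $\emp i$ being $\Old$ makes $\emp$ look exactly like a $\Young$ child from $\emp i$'s side (so $m_-(\emp)=m$ still, with the right $\Gamma_\emp$-law), and that the age of $\emp i$ marginalised over $A_\emp\sim\Unif[0,1]$ reproduces the uniform age of $\hat\emp$ together with the correct conditional law of $A_{\hat\emp 1}$ given $A_{\hat\emp}$. I expect this matching of the Poisson intensities $\rho_\emp$ and the Gamma strengths to be the only genuinely delicate computation; everything else is a bookkeeping of the Ulam--Harris exploration and an appeal to the branching-process self-similarity already built into the definition of the PPT.
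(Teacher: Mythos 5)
Your high-level strategy --- reduce the claim to a distributional identity between pairs of marked subtrees and then push it through the measurable functional $\h$ --- is sound, and it is essentially the route the paper takes (the paper phrases it through the recursion \eqref{eq:test:1} plus exchangeability of the $\Old$-labelled children, which amounts to the same thing). The problem is that your key step (iii) targets the wrong pair of trees on the right-hand side. You propose to cut the edge $\{\hat\emp,\hat\emp 1\}$ and match $\Tree_{\emp\to\emp i}$ with ``$\Tree_{\hat\emp}$ minus the $\hat\emp 1$-branch'', i.e.\ with $\Tree_{\hat\emp\to\hat\emp 1}$. But $\h(\bff(\hat\emp))$ in \eqref{eq:lem:old:distributional:equivalence} is the magnetisation of the \emph{full} P\'olya point tree rooted at a node of type $(U,\Young)$, whose root has $m_{-}=m-1$ old children plus its Poisson young children; the tree $\Tree_{\hat\emp\to\hat\emp 1}$ has only $m-2$ old children at the root, so the laws do not agree, and your step (iv) would produce $\h_{-1}(\bff(\hat\emp))$ in place of $\h(\bff(\hat\emp))$. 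Moreover, the two coordinates of the right-hand pair are drawn \emph{independently} given the types (this is how the paper uses them in \eqref{for:lem:old:distributional:equivalence:5}), so no edge-cut decomposition of a single tree is appropriate on that side at all. The entire content of the lemma is precisely the off-by-one count that your matching misses: the root $\emp$ has exactly one more $\Old$ child than a $\Young$-labelled node of the same age, while both have strength law $\Gamma_{\sf{in}}(m)$ and hence the same young-children intensity, so deleting the single branch through $\emp i$ turns the root's offspring law into exactly that of a $\Young$ node of age $A_\emp$ --- this is the paper's \eqref{for:lem:old:distributional:equivalence:4}. Your parenthetical ``so $m_{-}(\emp)=m$ still'' points to the same confusion: in $\Tree_{\emp\to\emp i}$ the root retains only $m-1$ old children, and that reduction is what makes it match a $\Young$-type root.

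A second, related distraction is the re-rooting picture (``from $\emp i$'s viewpoint the parent $\emp$ is a $\Young$ neighbour''): it is not needed and muddies the bookkeeping, since nothing in the argument requires viewing $\emp$ as a child of $\emp i$. What is actually needed is only (a) the identification of $\Tree_{\emp\to\emp i}$, conditionally on $A_\emp=a$, with a full PPT rooted at $(a,\Young)$; (b) the identification of $\Tree_{\emp i\to\emp}$ with a full PPT rooted at $(U_i^{1/\chi}a,\Old)$; (c) the conditional independence of these two subtrees given $A_\emp$; and (d) the equality of joint age laws $(A_\emp,U_i^{1/\chi}A_\emp)\overset{d}{=}(U,U_1^{1/\chi}U)$. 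With step (iii) corrected along these lines, your argument closes and coincides with the paper's.
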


The next lemma proves a similar distributional equivalence for $\Young$-labelled children of the root:

\medskip
\begin{lemma}[Distributional equivalence of $\Young$-labelled children]\label{lem:young:distributional:equivalence}
	Let $\tilde{\emp}$ have label $\Old$ and an age distributed according to a density, $a\mapsto\gamma(a)$, where $\gamma(a)$ is given by
	\eqn{\label{eq:def:gamma(a)} \gamma(a)=\frac{m+\delta}{m}(a^{\chi-1}-1)\quad\mbox{for }a\in[0,1]~.}
	Conditionally on $\tilde{\emp}$ having age $a$, let $\tilde{\emp}\tilde{1}$ have an age distributed according to $f_a$ and label $\Young$, where $f_a(x)$ is defined as
	\eqn{\label{eq:def:f(a)}
		f_a(x)=(1-\chi)\frac{x^{-\chi}}{1-a^{1-\chi}}\one_{\{x\geq a\}}~.}
	Then,
	\eqn{\label{eq:lem:young:distributional:equivalence}
		\big( \h(\bff(\tilde{\emp}\tilde{1})),\h(\bff(\tilde{\emp})) \big)\overset{d}{=}\big( \h(\bff(\hat{\emp})),\h(\bff(\hat{\emp} 1)) \big)~.
	}
\end{lemma}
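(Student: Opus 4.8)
The plan is to show that both sides of \eqref{eq:lem:young:distributional:equivalence} arise from the same joint law of an (age, strength)-pair for a parent node together with its $\Young$-labelled child, after which the equality of the $\h$-functionals follows because $\h$ is a measurable functional of the subtree, and the subtrees hanging below nodes with identical types are identically distributed in the P\'olya point tree. So the task reduces to a change-of-variables computation on the type space $\Scal=[0,1]\times\{\Old,\Young\}$. First I would recall how the pair $(\bff(\hat\emp),\bff(\hat\emp 1))$ is generated: $\hat\emp=(U,\Young)$ with $U\sim\Unif[0,1]$, its strength $\Gamma_{\hat\emp}\sim\Gamma_{\sf in}(m)$ (the $\Young$ case), and then $\hat\emp 1$ is an $\Old$ child, so its age is $U_1^{1/\chi}U$ with $U_1\sim\Unif[0,1]$ independent. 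I would write out the resulting joint density of $(A_{\hat\emp},A_{\hat\emp 1})$ explicitly — integrating out $U_1$ gives, for $x\le a$, a density proportional to $x^{\chi-1}a^{-\chi}$ on $\{0\le x\le a\le 1\}$ (up to the uniform factor for $a$), and the strengths attach the appropriate $\Gamma_{\sf in}(\cdot)$ laws determined by the labels.

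Next I would compute the joint law on the right-hand side coming from the $(\tilde\emp,\tilde\emp\tilde1)$ construction: $\tilde\emp$ is $\Old$ with age density $\gamma(a)=\tfrac{m+\delta}{m}(a^{\chi-1}-1)$ on $[0,1]$ (one checks $\int_0^1\gamma=1$ since $\int_0^1 a^{\chi-1}da=1/\chi=\tfrac{2m+\delta}{m+\delta}$), and, conditionally on $A_{\tilde\emp}=a$, the child $\tilde\emp\tilde1$ is $\Young$ with age density $f_a(x)=(1-\chi)\dfrac{x^{-\chi}}{1-a^{1-\chi}}\one_{\{x\ge a\}}$. Multiplying, the joint density of $(A_{\tilde\emp},A_{\tilde\emp\tilde1})$ is $\tfrac{m+\delta}{m}(a^{\chi-1}-1)\cdot(1-\chi)\dfrac{x^{-\chi}}{1-a^{1-\chi}}\one_{\{a\le x\le1\}}$, and since $a^{\chi-1}-1=\dfrac{1-a^{1-\chi}}{a^{1-\chi}}$, this collapses to $\tfrac{(m+\delta)(1-\chi)}{m}\,a^{\chi-2}x^{-\chi}\one_{\{a\le x\le 1\}}$. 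The point is that after swapping the roles of the two coordinates (the parent on the left is $\hat\emp 1$, which plays the role of $\tilde\emp\tilde 1$; the child $\hat\emp$ plays the role of $\tilde\emp$ — note the reversal of which node is younger), the two densities agree on their common support, and the labels match: in both constructions the older of the two nodes carries label $\Old$ (hence strength $\Gamma_{\sf in}(m+1)$) and the younger carries label $\Young$ (hence strength $\Gamma_{\sf in}(m)$), consistently with the P\'olya point tree rules. With $1-\chi=\tfrac{m}{2m+\delta}$ one checks the normalising constant is exactly $1$, confirming both are genuine probability densities.

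Once the pairs $(\bff(\hat\emp),\bff(\hat\emp1))$ and $(\bff(\tilde\emp\tilde1),\bff(\tilde\emp))$ are shown to be equal in distribution as $\Scal\times\Scal$-valued random variables, the conclusion \eqref{eq:lem:young:distributional:equivalence} is immediate: by Lemma~\ref{lem:h:magnetization}, $\h(\bff(\omega))=\magn{\omega\to}$ is a fixed measurable functional of the subtree $\Tree_{\omega\to}$, the subtrees below a parent and its child are generated by the same recursive rule driven only by the nodes' types, and the $\h$'s attached to distinct subtrees are conditionally independent given the types — so equality of the type-pairs transfers to equality of the $\h$-pairs. The main obstacle is getting the bookkeeping of the size-biasing right: the age density $\gamma$ is precisely the age-marginal of an $\Old$ child of the root \emph{as seen from a uniformly chosen node} (i.e.\ it is the mixture over the root age $A_\emp\sim\Unif[0,1]$ of the law of $U^{1/\chi}A_\emp$ restricted appropriately), and one must verify that conditioning on this age and then attaching a $\Young$ child via $f_a$ reproduces exactly the size-biased/reversed view that appears when we instead start from $\hat\emp=(U,\Young)$ and look down at its $\Old$ child. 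Concretely this is the identity $a^{\chi-1}-1=(1-a^{1-\chi})/a^{1-\chi}$ together with a Fubini argument exchanging the order of integration over the two uniform variables; everything else is a routine density manipulation.
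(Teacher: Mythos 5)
Your proposal is correct and follows essentially the same route as the paper: both reduce the claim to showing that the joint law of the age-pair $(A_{\tilde\emp},A_{\tilde\emp\tilde 1})$ coincides (after the coordinate swap) with that of $(A_{\hat\emp 1},A_{\hat\emp})$ via the identity $a^{\chi-1}-1=(1-a^{1-\chi})/a^{1-\chi}$, the paper working with joint CDFs and you with joint densities, and then transfer the equality to the $\h$-pairs using that the $\h$'s are independent functionals of the types. One minor slip: the collapsed density should be $\tfrac{(m+\delta)(1-\chi)}{m}\,a^{\chi-1}x^{-\chi}\one_{\{a\le x\le 1\}}$ (i.e.\ exponent $\chi-1$, not $\chi-2$), which is what actually matches the $\chi\,t^{\chi-1}s^{-\chi}$ density of the other pair.
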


The following lemma establishes a convenient size-biasing argument related to the in-degree of the root:

\medskip
\begin{lemma}[Size-biased mixed Poisson]\label{lem:size-biased:poisson}
	Let $\tilde{\emp}$ have label $\Old$ and age having the density, $a\mapsto\gamma(a)$ defined in \eqref{eq:def:gamma(a)}. Then, the following size-biasing result regarding the in-degree of the root holds:
	\eqn{\label{eq:lem:size-biased:poisson}
		\ell\prob\Big( d_\emp^{\rm(in)}=\ell\mid A_\emp=a \Big) = \E\big[ d_\emp^{\rm(in)} \big]\prob\Big( d_{\tilde{\emp}}^{\rm(in)}=\ell-1\mid \tilde{\emp}=(a,\Old) \Big)\gamma(a)~.}
\end{lemma}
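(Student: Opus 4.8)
The plan is to unwind both sides of \eqref{eq:lem:size-biased:poisson} into explicit mixed-Poisson expressions and verify the identity by a direct computation, using the standard size-biasing property of the Poisson distribution together with the definition of the strength variables in the $\PPT$. Recall that, conditionally on $A_\emp=a$ and on the strength $\Gamma_\emp$, the in-degree $d_\emp^{\rm(in)}$ is Poisson with parameter equal to the total intensity mass $\int_a^1 \rho_\emp(x)\,\mathrm dx = (1-\chi)\Gamma_\emp a^{\chi-1}\int_a^1 x^{-\chi}\,\mathrm dx = \Gamma_\emp (a^{\chi-1}-1)$. Since the root has label $\emp$ with $m_-(\emp)=m$, the strength $\Gamma_\emp$ is $\Gamma_{\sf in}(m+1)$-distributed, i.e.\ a Gamma$(m+\delta+1,1)$ random variable. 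Hence $d_\emp^{\rm(in)}$, given $A_\emp=a$, is a mixed Poisson with a Gamma mixing law, which is a negative binomial; write out $\prob(d_\emp^{\rm(in)}=\ell\mid A_\emp=a)$ as the integral $\int_0^\infty \mathrm e^{-\lambda(a^{\chi-1}-1)}\frac{(\lambda(a^{\chi-1}-1))^\ell}{\ell!}\,\frac{\lambda^{m+\delta}\mathrm e^{-\lambda}}{\Gamma(m+\delta+1)}\,\mathrm d\lambda$.

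Next I would do the same for the right-hand side. The node $\tilde\emp$ has label $\Old$, so $m_-(\tilde\emp)=m$ and its strength $\Gamma_{\tilde\emp}$ is $\Gamma_{\sf in}(m+1)=$ Gamma$(m+\delta+1,1)$ as well; conditionally on $\tilde\emp=(a,\Old)$ its in-degree $d_{\tilde\emp}^{\rm(in)}$ is mixed Poisson with the same intensity mass $\Gamma_{\tilde\emp}(a^{\chi-1}-1)$. The key algebraic step is the elementary size-biasing identity for Poisson: $\ell\,\mathrm e^{-\mu}\mu^\ell/\ell! = \mu\,\mathrm e^{-\mu}\mu^{\ell-1}/(\ell-1)!$, applied pointwise inside the Gamma integral with $\mu=\lambda(a^{\chi-1}-1)$. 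This turns the left-hand side into $(a^{\chi-1}-1)\int_0^\infty \lambda\,\mathrm e^{-\lambda(a^{\chi-1}-1)}\frac{(\lambda(a^{\chi-1}-1))^{\ell-1}}{(\ell-1)!}\frac{\lambda^{m+\delta}\mathrm e^{-\lambda}}{\Gamma(m+\delta+1)}\,\mathrm d\lambda$, and the extra factor $\lambda$ combined with $\lambda^{m+\delta}/\Gamma(m+\delta+1) = (m+\delta)\cdot\lambda^{m+\delta}/\Gamma(m+\delta+1)\cdot\Gamma(m+\delta)^{-1}\Gamma(m+\delta)\cdots$ — more cleanly, $\lambda\cdot\frac{\lambda^{m+\delta}\mathrm e^{-\lambda}}{\Gamma(m+\delta+1)} = (m+\delta)\,\frac{\lambda^{m+\delta}\mathrm e^{-\lambda}}{\Gamma(m+\delta+1)}$ fails dimensionally; instead use $\frac{\lambda^{m+\delta+1}}{\Gamma(m+\delta+1)} = (m+\delta+1)\frac{\lambda^{m+\delta+1}}{\Gamma(m+\delta+2)}$, so the $\lambda$-size-biasing of a Gamma$(m+\delta+1,1)$ density is $(m+\delta+1)$ times a Gamma$(m+\delta+2,1)$ density. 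Keeping careful track of these constants, the left-hand side becomes $(m+\delta+1)(a^{\chi-1}-1)$ times $\prob(d_{\tilde\emp}^{\rm(in)}=\ell-1\mid\tilde\emp=(a,\Old))$, provided the strength mixing for $\tilde\emp$ is matched; one then reconciles the prefactor with $\E[d_\emp^{\rm(in)}]\gamma(a)$ by computing $\E[d_\emp^{\rm(in)}]=\E[\Gamma_\emp]\,\E[A_\emp^{\chi-1}-1]$ via the density of $A_\emp$ (uniform on $[0,1]$), giving $\E[d_\emp^{\rm(in)}]=(m+\delta+1)\cdot\big(\tfrac{1}{\chi}-1\big)$ since $\int_0^1 a^{\chi-1}\mathrm da = 1/\chi$, and then checking $\big(\tfrac{1}{\chi}-1\big)\gamma(a) = \tfrac{m+\delta}{m\chi}\cdot\tfrac{m}{m+\delta}(a^{\chi-1}-1)\cdots$ — i.e.\ verifying that $\gamma(a)$ as defined in \eqref{eq:def:gamma(a)} is exactly the size-biased-in-$A$ reweighting $a\mapsto (a^{\chi-1}-1)/\E[A_\emp^{\chi-1}-1]$, which holds because $\tfrac1\chi - 1 = \tfrac{m}{m+\delta}$, so $(a^{\chi-1}-1)/(\tfrac1\chi-1) = \tfrac{m+\delta}{m}(a^{\chi-1}-1) = \gamma(a)$.

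Assembling these pieces, the identity \eqref{eq:lem:size-biased:poisson} follows by matching coefficients. The only genuinely delicate point — and the step I expect to be the main bookkeeping obstacle — is correctly propagating the Gamma-shape bookkeeping through the two size-biasings (once in the Poisson parameter $\ell$, once implicitly in $\lambda=\Gamma_\emp$) and confirming that the shift $m+\delta+1 \to m+\delta+2$ in the Gamma law is absorbed consistently on both sides; here it is important that $\tilde\emp$ carries label $\Old$ (so $\Gamma_{\tilde\emp}\sim\Gamma_{\sf in}(m+1)$), not $\Young$, and that the definition of $\gamma$ is precisely the $A_\emp$-size-biased density. Once the constants $m+\delta$, $\chi$, $1/\chi - 1 = m/(m+\delta)$ are tracked carefully, everything cancels and the stated equality drops out; no conceptual input beyond the Poisson/Gamma size-biasing identities and the explicit $\PPT$ construction in Section~\ref{sec:RPPT} is needed.
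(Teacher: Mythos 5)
Your overall strategy (write both sides as Gamma-mixed Poisson integrals, apply the Poisson size-biasing identity $\ell\,\mathrm e^{-\mu}\mu^{\ell}/\ell!=\mu\,\mathrm e^{-\mu}\mu^{\ell-1}/(\ell-1)!$ pointwise, and absorb the extra factor of $\lambda$ into a shift of the Gamma shape parameter) is exactly the paper's, and your identification of $\gamma$ as the size-biased age density, using $1/\chi-1=m/(m+\delta)$, is correct. But there is one genuine error that derails the bookkeeping: you take the root's strength $\Gamma_\emp$ to be $\Gamma_{\sf in}(m+1)=\mathrm{Gamma}(m+\delta+1,1)$, inferring this from $m_-(\emp)=m$. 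The strength assignment in the $\PPT$ is tied to the $\Old/\Young$ label, not to $m_-$; the root carries no such label and its strength is $\Gamma_{\sf in}(m)=\mathrm{Gamma}(m+\delta,1)$ (the root behaves like a $\Young$ node in this respect, which is also what the paper uses in the proof of Proposition~\ref{prop:explicit:pressure} and in Lemma~\ref{lem:old:distributional:equivalence}). Only an $\Old$-labelled node such as $\tilde\emp$ has the size-biased strength $\mathrm{Gamma}(m+\delta+1,1)$.

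This is not a harmless constant: the entire point of the lemma is that size-biasing the $\lambda$-integral bumps the root's $\mathrm{Gamma}(m+\delta,1)$ mixing law up to exactly the $\mathrm{Gamma}(m+\delta+1,1)$ law of the $\Old$-labelled node $\tilde\emp$, so that the $\ell$-dependence on both sides matches and only the constant $\E[d_\emp^{\rm(in)}]\gamma(a)=m\cdot\frac{m+\delta}{m}(a^{\chi-1}-1)=(m+\delta)(a^{\chi-1}-1)$ remains to be checked. With your starting point the left-hand side would, after size-biasing, be a mixed Poisson with $\mathrm{Gamma}(m+\delta+2,1)$ mixing, which is a \emph{different distribution in $\ell$} from the $\mathrm{Gamma}(m+\delta+1,1)$-mixed Poisson on the right; no choice of constant prefactor can reconcile them, so the "shift is absorbed consistently" step you flag as delicate would in fact fail. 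Relatedly, your value $\E[d_\emp^{\rm(in)}]=(m+\delta+1)(1/\chi-1)=(m+\delta+1)m/(m+\delta)$ is wrong; the correct value is $\E[\Gamma_\emp]\,\E[A_\emp^{\chi-1}-1]=(m+\delta)\cdot\frac{m}{m+\delta}=m$. Once you replace the root's strength by $\mathrm{Gamma}(m+\delta,1)$, your computation goes through verbatim and coincides with the paper's proof.
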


We will first prove Proposition~\ref{prop:explicit:pressure} using these lemmas and then proceed to prove the lemmas themselves.

\begin{proof}[Proof of Proposition~\ref{prop:explicit:pressure}]
	
	We now proceed to prove the explicit expression for the thermodynamic limit of the pressure. By \eqref{for:prop:new:2},
	\begin{align}\label{for:thm:pressure-limit:1}
		\bar\varphi(\beta,B) = &\ \mathbb{E}_{\mu}\left[ \log\left\{ \sum\limits_{\sigma\in\{-1,1\}} \exp\left(B\sigma\right) \prod\limits_{j=1}^{D(\emp)} \left( \sum\limits_{\sigma_j\in\{-1,1\}} \exp\left(\beta\sigma\sigma_j\right) \nu_{\emp j\to\emp}(\sigma_j) \right) \right\}\right.\nonumber \\
		&\hspace{1.75cm}\left.-\frac{1}{2}\sum\limits_{j=1}^{D(\emp)} \log\left\{ \sum\limits_{\sigma,\sigma_j\in\{-1,1\}} \exp\left(\beta\sigma\sigma_j\right) \nu_{\emp j\to\emp}(\sigma_j) \nu_{\emp\to\emp j}(\sigma) \right\} \right]~.
	\end{align}
	To derive an expression similar to \eqref{eq:thm:thermodynamic-limit:pressure:explicit} from \eqref{for:thm:pressure-limit:1}, we simplify the terms on the RHS of \eqref{for:thm:pressure-limit:1} step by step.
	We start by writing the first term as
	\eqan{\label{for:thm:pressure-limit:02}
		&\E_\mu\left[ \log\Big\{ \e^B\prod\limits_{j=1}^{D(\emp)}\Big( \e^\beta\nu_{\emp j\to\emp}(+1)+\e^{-\beta}\nu_{\emp j\to\emp}(-1) \Big)\right.\nn\\
		&\hspace{5cm}\left.+\e^{-B}\prod\limits_{j=1}^{D(\emp)}\Big( \e^{-\beta}\nu_{\emp j\to\emp}(+1)+\e^{\beta}\nu_{\emp j\to\emp}(-1) \Big) \Big\} \right]\nn\\
		=&\E_\mu\left[ \log\Bigg\{ \e^B\prod\limits_{j=1}^{D(\emp)}\frac{2\Big( \e^\beta\nu_{\emp j\to\emp}(+1)+\e^{-\beta}\nu_{\emp j\to\emp}(-1) \Big)}{\e^{\beta}+\e^{-\beta}}\right.\\
		&\hspace{1.5cm}\left.+\e^{-B}\prod\limits_{j=1}^{D(\emp)}\frac{2\Big( \e^{-\beta}\nu_{\emp j\to\emp}(+1)+\e^{\beta}\nu_{\emp j\to\emp}(-1) \Big)}{\e^{\beta}+\e^{-\beta}} \Bigg\}+D(\emp)\log\Bigg\{ \frac{\e^{\beta}+\e^{-\beta}}{2} \Bigg\} \right]~.\nn
	}
	Using that $\big(2\big( \e^\beta\nu_{\emp j\to\emp}(+1)+\e^{-\beta}\nu_{\emp j\to\emp}(-1) \big)\big)/\big(\e^{\beta}+\e^{-\beta}\big)$ equals $\big( 1+\tanh(\beta)\big(2\nu_{\emp j\to\emp}(+1)-1 \big) \big)$, we arrive at
	\eqan{\label{for:thm:pressure-limit:2}
		&\E_\mu[D(\emp)]\log\cosh(\beta)+\E\left[ \log\Bigg\{ \e^B\prod\limits_{j=1}^{D(\emp)}\Big( 1+\tanh(\beta)\big(2\nu_{\emp j\to\emp}(+1)-1 \big) \Big)\right.\nn\\
		&\hspace{5cm}\left. +\e^{-B}\prod\limits_{j=1}^{D(\emp)}\Big( 1-\tanh(\beta)\big(2\nu_{\emp j\to\emp}(+1)-1 \big) \Big) \Bigg\} \right]~.
	}
	Note that $\magn{\emp j\to}=2\nu_{\emp j\to\emp}(+1)-1$ by the definition of the magnetization. Therefore, using Lemma~\ref{lem:h:magnetization} and \( \h(\bff(\emp j))=\arctanh{\big( \magn{\emp j \to} \big)} \), we simplify \eqref{for:thm:pressure-limit:2} as
	\eqan{\label{for:thm:pressure-limit:3}
		&\E_\mu[D(\emp)]\log\cosh(\beta)+\E\Bigg[ \log\Big\{ \e^B\prod\limits_{j=1}^{D(\emp)}\Big( 1+\tanh(\beta)\tanh\big(\h(\bff(\emp j))\big) \Big)\nn\\
		&\hspace{5.5cm} +\e^{-B}\prod\limits_{j=1}^{D(\emp)}\Big( 1-\tanh(\beta)\tanh\big(\h(\bff(\emp j))\big) \Big) \Big\} \Bigg]~.
	}
	Similarly, we simplify the second term on the RHS of \eqref{for:thm:pressure-limit:1} as
	\eqan{\label{for:thm:pressure-limit:old}
		&\E_\mu\Bigg[ \sum\limits_{j=1}^{D(\emp)}\log\Big\{ \sum\limits_{\sigma,\sigma_j\in\{-1,1\}}\e^{\beta\sigma\sigma_j}\nu_{\emp j\to\emp}(\sigma_j)\nu_{\emp\to\emp j}(\sigma) \Big\} \Bigg]\nn\\
		&\hspace{0.5cm}=\E_{\mu}\Bigg[ \sum\limits_{j=1}^{D(\emp)} \log\Bigg\{ \e^{\beta}\nu_{\emp j\to\emp}(+1)\nu_{\emp \to\emp j}(+1)+\e^{-\beta}\nu_{\emp j\to\emp}(-1)\nu_{\emp \to\emp j}(+1)\nn\\
		&\hspace{2.65cm}+\e^{-\beta}\nu_{\emp j\to\emp}(+1)\nu_{\emp \to\emp j}(-1)+\e^{\beta}\nu_{\emp j\to\emp}(-1)\nu_{\emp \to\emp j}(-1) \Bigg\} \Bigg]~.
	}
	Next, note that for all \(x,y\in\R\),
	\[
		\begin{aligned}
			&2\big\{\e^{\beta}xy+\e^{-\beta}x(1-y)+\e^{-\beta}(1-x)y+\e^{\beta}(1-x)(1-y)\big\}\nn\\
		&\hspace{4cm}=  (\e^\beta+\e^{-\beta})+(\e^\beta-\e^{-\beta})(2x-1)(2y-1)~,
		\end{aligned}
	\]
	to arrive at
	\eqan{\label{for:thm:pressure-limit:4}
		&\E_\mu\Bigg[ \sum\limits_{j=1}^{D(\emp)}\log\Big\{ \sum\limits_{\sigma,\sigma_j\in\{-1,1\}}\e^{\beta\sigma\sigma_j}\nu_{\emp j\to\emp}(\sigma_j)\nu_{\emp\to\emp j}(\sigma) \Big\} \Bigg]\nn\\
		&\hspace{1cm}=~\E_{\mu}\Bigg[ \sum\limits_{j=1}^{D(\emp)}\log\Big\{ 1+\tanh(\beta)\big( 2\nu_{\emp j\to\emp}(+1)-1 \big)\big( 2\nu_{\emp\to\emp j}(+1)-1 \big) \Big\} \Bigg]\\
		&\hspace{1.5cm}+\E_{\mu}[D(\emp)]\log\cosh(\beta)~.\nn
	}
	Again using definition of $\magn{\to \emp j}$ and Lemma~\ref{lem:h:magnetization}, we can rewrite the LHS of \eqref{for:thm:pressure-limit:4} as
	\eqn{\label{for:thm:pressure-limit:5}
		 \E_{\mu}\Bigg[ \sum\limits_{j=1}^{D(\emp)}\log\Big\{ 1+\tanh(\beta)\tanh\big(\h_{-j}(\bff(\emp))\big)\tanh\big(\h(\bff(\emp j))\big) \Big\} \Bigg]+\E_\mu[D(\emp)]\log\cosh(\beta)~.
	}
	Now, plugging in the simplified forms obtained in \eqref{for:thm:pressure-limit:3} and \eqref{for:thm:pressure-limit:5} in \eqref{for:thm:pressure-limit:1}, we obtain
	\eqan{\label{for:thm:pressure-limit:6}
		&\bar\varphi(\beta,B)\\
		=& \frac{\E_\mu[D(\emp)]}{2}\log\cosh(\beta)-\frac{1}{2}\E_{\mu}\Bigg[ \sum\limits_{j=1}^{D(\emp)}\log\Big\{ 1+\tanh(\beta)\tanh\big(\h_{-j}(\bff(\emp))\big)\tanh\big(\h(\bff(\emp j))\big) \Big\} \Bigg]\nn\\
		&\hspace{1cm}+\E_\mu\Bigg[ \log\Big\{ \e^B\prod\limits_{j=1}^{D(\emp)}\Big( 1+\tanh(\beta)\tanh\big(\h(\bff(\emp j))\big) \Big)\nn\\
		&\hspace{5cm}+\e^{-B}\prod\limits_{j=1}^{D(\emp)}\Big( 1-\tanh(\beta)\tanh\big(\h(\bff(\emp j))\big) \Big) \Big\} \Bigg]~.\nn
	}
	Next, we use the distributional properties of the nodes in the P\'{o}lya point tree, as outlined in Lemmas~\ref{lem:old:distributional:equivalence} and \ref{lem:young:distributional:equivalence}, to conclude that the RHS of \eqref{for:thm:pressure-limit:6} further simplifies to the RHS of \eqref{eq:thm:thermodynamic-limit:pressure:explicit}. Note that $D(\emp)=m+d_\emp^{\rm(in)}$, where $d_\emp^{\rm(in)}$ is the number of $\Young$-labelled children of the root in the P\'{o}lya point tree. Therefore, the second term on the RHS of \eqref{for:thm:pressure-limit:6} can be divided into two parts: one comprising the contribution from the $\Old$-labelled children of the root, and the other comprising the contribution from the $\Young$-labelled children of the root, i.e.,
	\eqan{\label{for:thm:pressure-limit:7}
		&\E_{\mu}\Bigg[ \sum\limits_{j=1}^{D(\emp)}\log\Big\{ 1+\tanh(\beta)\tanh\big(\h_{-j}(\bff(\emp))\big)\tanh\big(\h(\bff(\emp j))\big) \Big\} \Bigg]\\
		=&\E_{\mu}\Bigg[ \sum\limits_{j=1}^{m}\log\Big\{ 1+\tanh(\beta)\tanh\big(\h_{-j}(\bff(\emp))\big)\tanh\big(\h(\bff(\emp j))\big) \Big\} \Bigg]\nn\\
		&\hspace{3cm}+\E_{\mu}\Bigg[ \sum\limits_{j=m+1}^{m+d_\emp^{\rm(in)}}\log\Big\{ 1+\tanh(\beta)\tanh\big(\h_{-j}(\bff(\emp))\big)\tanh\big(\h(\bff(\emp j))\big) \Big\} \Bigg]~.\nn
	}
	Using Lemma~\ref{lem:old:distributional:equivalence}, we simplify the first sum in the RHS of \eqref{for:thm:pressure-limit:7} as
	\eqn{\label{for:thm:pressure-limit:8}
		m\E_{\mu}\Big[ \log\Big\{ 1+\tanh(\beta)\tanh\big(\h(\bff(\hat{\emp}))\big)\tanh\big(\h(\bff(\hat{\emp} 1))\big) \Big\} \Big]~.}
	Next, we simplify the second term on the RHS of \eqref{for:thm:pressure-limit:7}. From the definition of the P\'{o}lya point tree, we know that $d_\emp^{\rm(in)}$ is a mixed-Poisson random variable with intensity parameter $\Gamma\big(A_\emp^{\chi-1}-1\big)$, where $\Gamma \sim \text{Gamma}(m+\delta, 1)$ and $A_\emp \sim \text{Unif}(0,1)$. Furthermore, conditionally on $\{A_\emp = a\}$ and $d_\emp^{\rm(in)} = \ell$, the ages of $\{\emp(m+1), \ldots, \emp(m+\ell)\}$ are i.i.d.\ with density $f_a$ defined in \eqref{eq:def:f(a)}. Conditionally on $\{A_\emp = a\}$ and $d_{\emp}^{\rm(in)} = \ell$, the age of $\emp\tilde{1}$ has density $f_a$ and label $\Young$.
	Therefore,
	\eqan{\label{for:thm:pressure-limit:9}
		&\E_{\mu}\Bigg[ \sum\limits_{j=m+1}^{m+d_\emp^{\rm(in)}}\log\Big\{ 1+\tanh(\beta)\tanh\big(\h_{-j}(\bff(\emp))\big)\tanh\big(\h(\bff(\emp j))\big) \Big\} \Bigg]\\
		=&\int\limits_0^1\sum\limits_{\ell=1}^\infty \E_{\mu}\Bigg[ \sum\limits_{j=m+1}^{m+\ell}\log\Big\{ 1+\tanh(\beta)\tanh\big(\h_{-j}(\bff(\emp))\big)\tanh\big(\h(\bff(\emp j))\big) \Big\}\mid A_\emp=a,d_\emp^{\rm(in)}=\ell \Bigg]\nn\\
		&\hspace{9.5cm}\times\prob\big( d_\emp^{\rm(in)}=\ell\mid A_\emp=a \big)\,da\nn\\
		=&\int\limits_0^1\sum\limits_{\ell=1}^\infty \E_{\mu}\Bigg[ \log\Big\{ 1+\tanh(\beta)\tanh\big(\h_{-\tilde{1}}(\bff(\emp))\big)\tanh\big(\h(\bff(\emp \tilde{1}))\big) \Big\}\mid A_\emp=a,d_\emp^{\rm(in)}=\ell \Bigg]\nn\\
		&\hspace{9.5cm}\times\ell\prob\big( d_\emp^{\rm(in)}=\ell\mid A_\emp=a \big)\,da~.\nn
	}
	Note that, conditionally on $\{A_\emp = a, d_\emp^{\rm(in)} = \ell\}$, $\h(\emp\tilde{1})$ and $\h_{-\tilde{1}}(\emp)$ are independent. Let $\tilde{\emp}$ have label $\Old$ and age with density $\gamma(a)$. Conditionally on $A_{\tilde{\emp}} = a$, $\tilde{\emp}\tilde{1}$ has label $\Young$ and its age has density $f_a$. 
	By Lemma~\ref{lem:h:magnetization},
	\begin{equation}
		\label{for:thm:pressure-limit:10}
		\begin{aligned}
			\h_{-\tilde{1}}(\bff(\emp)) \mid \{A_\emp = a, d_\emp^{\rm(in)} = \ell\} ~ &\overset{d}{=} ~ \h(\bff(\tilde{\emp})) \mid \{\tilde{\emp} = (a, \Old), d_{\tilde{\emp}}^{\rm(in)} = \ell - 1\}, \\
			\h(\bff(\emp\tilde{1})) \mid \{A_\emp = a, d_\emp^{\rm(in)} = \ell\} ~ &\overset{d}{=} ~ \h(\bff(\tilde{\emp}\tilde{1})) \mid \{\tilde{\emp} = (a, \Old), d_{\tilde{\emp}}^{\rm(in)} = \ell - 1\}.
		\end{aligned}
	\end{equation}
	
	Conditionally on $\tilde{\emp} = (a, \Old)$, drawing $\h(\tilde{\emp})$ and $\h(\tilde{\emp}\tilde{1})$ independently leads to the distributional equality
	\begin{equation}
		\label{for:thm:pressure-limit:11}
		\big(\h(\bff(\tilde{\emp})), \h(\bff(\tilde{\emp}\tilde{1}))\big) \mid \{\tilde{\emp} = (a, \Old), d_{\tilde{\emp}}^{\rm(in)} = \ell - 1\}  \overset{d}{=}  \big(\h_{-\tilde{1}}(\bff(\emp)), \h(\bff(\emp\tilde{1}))\big) \mid \{A_\emp = a, d_\emp^{\rm(in)} = \ell\}.
	\end{equation}
	
	Hence, using \eqref{for:thm:pressure-limit:11} and Lemma~\ref{lem:size-biased:poisson}, \eqref{for:thm:pressure-limit:10} can be simplified as
	\eqan{\label{for:thm:pressure-limit:12}
		\E_{\mu}&\Bigg[ \sum\limits_{j=m+1}^{m+d_\emp^{\rm(in)}}\log\Big\{ 1+\tanh(\beta)\tanh\big(\h_{-j}(\bff(\emp))\big)\tanh\big(\h(\bff(\emp j))\big) \Big\} \Bigg]\nn\\
		=&\int\limits_0^1\sum\limits_{\ell=1}^\infty \E_{\mu}\Bigg[ \log\Big\{ 1+\tanh(\beta)\tanh\big(\h(\bff(\tilde{\emp}\tilde{1}))\big)\tanh\big(\h( \bff(\tilde{\emp}))\big) \Big\}\mid \tilde{\emp}=(a,\Old),d_{\tilde{\emp}}^{\rm(in)}=\ell-1 \Bigg]\nn\\
		&\hspace{6.5cm}\times\E\big[ d_{\emp}^{\rm(in)} \big]\prob\big( d_{\tilde{\emp}}^{\rm(in)}=\ell-1\mid \tilde{\emp}=(a,\Old) \big)\gamma(a)\,da\nn\\
		=&~\E\big[ d_{\emp}^{\rm(in)} \big]\E_{\mu}\Big[ \log\Big\{ 1+\tanh(\beta)\tanh\big(\h(\bff(\tilde{\emp}\tilde{1}))\big)\tanh\big(\h(\bff( \tilde{\emp}))\big) \Big\} \Big]~.
	}
	By Lemma~\ref{lem:young:distributional:equivalence} and \eqref{for:thm:pressure-limit:7},~\eqref{for:thm:pressure-limit:8} and \eqref{for:thm:pressure-limit:12}, we conclude that
	\eqan{\label{for:thm:pressure-limit:13}
		&\E_{\mu}\Bigg[ \sum\limits_{j=1}^{D(\emp)}\log\Big\{ 1+\tanh(\beta)\tanh\big(\h_{-j}(\bff(\emp))\big)\tanh\big(\h(\bff(\emp j))\big) \Big\} \Bigg]\nn\\
		&\hspace{1cm}=~\Big(m+\E_{\mu}\big[ d_{\emp}^{\rm(in)} \big]\Big)\E_{\mu}\Big[ \log\Big\{ 1+\tanh(\beta)\tanh\big(\h(\bff({\emp}{1}))\big)\tanh\big(\h(\bff( \hat{\emp}))\big) \Big\} \Big]\nn\\
		&\hspace{1cm}=~\E_\mu\big[ D(\emp) \big]\E_{\mu}\Big[ \log\Big\{ 1+\tanh(\beta)\tanh\big(\h(\bff({\emp}{1}))\big)\tanh\big(\h( \bff(\hat{\emp}))\big) \Big\} \Big]~.}
	Therefore, substituting \eqref{for:thm:pressure-limit:13} in \eqref{for:thm:pressure-limit:6}, we obtain that for \(\bar\varphi(\beta,B)\) equals the expression in \eqref{eq:thm:thermodynamic-limit:pressure:explicit} for $\varphi(\beta,B)$.
\end{proof}

Now that we have proved Proposition~\ref{prop:explicit:pressure} subject to Lemmas~\ref{lem:old:distributional:equivalence}, ~\ref{lem:young:distributional:equivalence} and \ref{lem:size-biased:poisson}, we provide the proof to these lemmata one by one. First, we prove Lemma~\ref{lem:old:distributional:equivalence}:
\begin{proof}[Proof of Lemma~\ref{lem:old:distributional:equivalence}]
	Note that in the P\'{o}lya point tree, the ages of the $\Old$-labelled children are an exchangeable sequence of random variables, i.e., for any permutation $\Upsilon:\N\to\N$,
	\eqn{\label{for:lem:old:distributional:equivalence:1}
		\big( A_{\emp 1},\ldots,A_{\emp m} \big)\overset{d}{=}\big( A_{\emp \Upsilon(1)},\ldots,A_{\emp \Upsilon(m)} \big)~.}
	Fix any $i\in[m]$, and define for \(x\in[m]\) 
	\[
	\Upsilon_i(x)=\begin{cases}
		i \quad \mbox{if }x=1,\\
		1 \quad \mbox{if }x=i,\\
		x \quad \mbox{otherwise.} 
	\end{cases}
	\]
	Then, by the distributional recursion in Lemma~\ref{lem:h:magnetization}, we obtain
	\eqn{\label{for:lem:old:distributional:equivalence:2}
		\big(\h(\bff(\emp i)),\h_{-i}(\bff(\emp))\big)\overset{d}{=}\Big( \h(\bff(\emp \Upsilon_i(1))), B+\sum\limits_{j=2}^{D(\emp)}\arctanh\big\{\tanh(\beta)\tanh\big(\h(\bff(\emp \Upsilon_i(j)))\big)\big\} \Big)~.}
	Further, for any $\omega\in\Scal$, the distribution of $h(\omega)$ is dependent only on $\omega$. Hence using \eqref{for:lem:old:distributional:equivalence:1},
	\eqan{\label{for:lem:old:distributional:equivalence:3}
		&\Big( \h(\bff(\emp \Upsilon_i(1))), B+\sum\limits_{j=2}^{D(\emp)}\arctanh\big\{\tanh(\beta)\tanh\big(\h(\bff(\emp \Upsilon_i(j)))\big)\big\} \Big)\nn\\
		&\hspace{2cm}\overset{d}{=}\Big( \h(\bff(\emp 1)), B+\sum\limits_{j=2}^{D(\emp)}\arctanh\big\{\tanh(\beta)\tanh\big(\h(\bff(\emp j))\big)\big\} \Big)~.}
	Note that $\hat{\emp}= (U,\Young)$ is labelled $\Young$, and by the definition of the P\'{o}lya point tree, we obtain that the root and $\Young$-labelled nodes of the P\'{o}lya point tree are i.i.d. Furthermore, the number of $\Old$-labelled children of the root is one more than the number of $\Young$-labelled children in the P\'{o}lya point tree. Therefore, with $\hat{\emp}$ having label $\Young$ and age $A_\emp \sim \text{Unif}[0,1]$, we obtain
	\begin{equation}
		\label{for:lem:old:distributional:equivalence:4}
		B + \sum_{j=2}^{D(\emp)} \arctanh\left\{ \tanh(\beta) \tanh\left( \h(\bff(\emp j)) \right) \right\} \overset{d}{=} \h(\bff(\hat{\emp})).
	\end{equation}
	Furthermore, conditionally on $\{A_\emp = a\}$, $\h(\bff(\emp1))$ and $\{\h(\bff(\emp2)), \ldots, \h(\bff(\emp D(\emp)))\}$ are independent, and $\h(\bff(\emp 1))$ is equal in distribution to $\h(\bff(\hat{\emp}1))$. Therefore, conditionally on $\{A_\emp = a\}$, drawing $\h(\bff(\hat{\emp}))$ and $\h(\bff(\hat{\emp}1))$ independently leads to
	\begin{equation}
		\label{for:lem:old:distributional:equivalence:5}
		\Big( \h(\bff(\emp1)), B + \sum_{j=2}^{D(\emp)} \arctanh\left\{ \tanh(\beta) \tanh\left( \h(\bff(\emp j)) \right) \right\} \Big) \overset{d}{=} \left( \h(\bff(\hat{\emp}1)), \h(\bff(\hat{\emp})) \right).
	\end{equation}
	Hence, Lemma~\ref{lem:old:distributional:equivalence} follows immediately from \eqref{for:lem:old:distributional:equivalence:5}.
\end{proof}
Next, we prove Lemma~\ref{lem:young:distributional:equivalence}:
\begin{proof}[Proof of Lemma~\ref{lem:young:distributional:equivalence}]
	To prove this lemma, we show that the joint distributions of the ages of $\big( \tilde{\emp}\tilde{1},\tilde{\emp} \big)$ and $\big( \hat{\emp},\hat{\emp}1 \big)$ are equal. First, we compute the joint distribution of the ages of $\big(\hat{\emp},\hat{\emp}1\big)$ as
	\eqn{\label{for:lem:distributional:equivalence:1}
		\prob\big( A_{\hat{\emp}}\leq x,A_{\hat{\emp}{1}}\leq y \big)=~\int\limits_0^x\prob\big( A_{\hat{\emp}{1}}\leq y\mid A_{\hat{\emp}}=t \big)\,dt =	\int\limits_0^x	\prob\big( U^{1/\chi}t\leq y\mid A_{\hat{\emp}}=t \big)\,dt~.
	}
	Note that $U$ and $A_{\hat{\emp}}$ are independent. Hence, upon further simplification,
	\eqn{\label{for:lem:distributional:equivalence:2}
		\prob\big( A_{\hat{\emp}}\leq x,A_{\hat{\emp}{1}}\leq y \big)=~\begin{cases}
			x &\mbox{if }x<y~,\\
			\frac{1}{1-\chi}y^{\chi}x^{1-\chi}-\frac{\chi}{1-\chi}y\hspace{3pt} &\mbox{if }x\geq y~.
	\end{cases}}
	Next, we compute the joint distribution of the ages of $\big(\tilde{\emp}\tilde{1},\tilde{\emp}\big)$ as
	\eqn{\label{for:lem:distributional:equivalence:3}
		\prob\big(A_{\tilde{\emp}\tilde{1}}\leq x,A_{\tilde{\emp}}\leq y\big)=~\int\limits_0^y \prob\big(A_{\tilde{\emp}\tilde{1}}\leq x\mid A_{\tilde{\emp}}=t \big)\gamma(t)\,dt= \int\limits_0^y \int\limits_0^x f_t(s)\gamma(t)\,ds\,dt~,
	}
	where \(\gamma(\cdot)\)  and \(f_t(\cdot)\) is as defined in \eqref{eq:def:gamma(a)} and \eqref{eq:def:f(a)}, respectively.
	The last equality in \eqref{for:lem:distributional:equivalence:3} is due to the fact that conditionally on $\{\tilde{\emp}=(t,\Old)\},~A_{\tilde{\emp}\tilde{1}}$ has density $f_t(\cdot)$. Therefore, after simplifying \eqref{for:lem:distributional:equivalence:3},
	\eqn{\label{for:lem:distributional:equivalence:4}
		\prob\big(A_{\tilde{\emp}\tilde{1}}\leq x,A_{\tilde{\emp}}\leq y\big)=\begin{cases}
			x &\mbox{if }x<y~,\\
			\frac{1}{1-\chi}y^{\chi}x^{1-\chi}-\frac{\chi}{1-\chi}y\hspace{3pt} &\mbox{if }x\geq y~,
	\end{cases}}
	proving the distributional equivalence of $\big( \tilde{\emp}\tilde{1},\tilde{\emp} \big)$ and $\big( \hat{\emp},\hat{\emp}1 \big)$. Since $\{\h(\omega):\omega\in\Scal\}$ are drawn independently, the joint distributions of $\big( \h(\bff(\tilde{\emp}\tilde{1})),\h(\bff(\tilde{\emp}))\big)$ and $\big( \h(\bff(\hat{\emp})),\h(\bff(\hat{\emp}1)) \big)$ are equal.
\end{proof}
Lastly, we address the proof of Lemma~\ref{lem:size-biased:poisson}. This lemma involves a size-biasing argument for the in-degree of the root of the P\'{o}lya point tree. Since the in-degree of the P\'{o}lya point tree follows a mixed-Poisson distribution, the size-biasing argument introduces an additional term alongside its expectation:

\begin{proof}[Proof of Lemma~\ref{lem:size-biased:poisson}]
	From the definition of the P\'{o}lya point tree, we know that $d_\emp^{\rm(in)}$ is a mixed-Poisson random variable with a mixing distribution $\Gamma_{\emp}\big(A_\emp^{\chi-1}-1\big)$, where $A_\emp$ is the age of the root of the P\'{o}lya point tree. Therefore, conditionally on $A_\emp = a$, $d_\emp^{\rm(in)}$ is a mixed Poisson random variable with a mixing distribution $\Gamma\big(a^{\chi-1}-1\big)$, where $\Gamma$ is a Gamma random variable with parameters $m + \delta$ and $1$. Thus,
	\eqan{\label{for:lem:size-bias:1}
		\ell \prob\big( d_\emp^{\rm(in)}=\ell \mid A_\emp=a \big)&=\ell \prob\big(\Poi\big( \Gamma(a^{\chi-1}-1) \big)=\ell  \big)\nn\\
		&=\int\limits_0^\infty \ell \prob\big(\Poi\big( w(a^{\chi-1}-1) \big)=\ell  \big)w^{m+\delta-1}\frac{\e^{-w}}{\Gamma(m+\delta)}\,dw~,
	}
	where $\Gamma(c)$ is the gamma function evaluated at $c$ for any $c \in \mathbb{R}^+$. From a direct computation of the probability mass function of a Poisson random variable with a fixed parameter $\lambda$, for all $\ell \in \mathbb{N}$,
	
	\begin{equation}
		\label{for:lem:size-bias:2}
		\ell \, \prob(\Poi(\lambda) = \ell) = \lambda \, \prob(\Poi(\lambda) = \ell - 1).
	\end{equation}
	Using \eqref{eq:def:gamma(a)} and\eqref{for:lem:size-bias:2}, the RHS of \eqref{for:lem:size-bias:1} can be simplified to
	\eqan{\label{for:lem:size-bias:3}
		&\int\limits_0^\infty \ell \prob\big(\Poi\big( w(a^{\chi-1}-1) \big)=\ell  \big)w^{m+\delta-1}\frac{\e^{-w}}{\Gamma(m+\delta)}\,dw\nn\\
		=&\int\limits_0^\infty \prob\big(\Poi\big( w(a^{\chi-1}-1) \big)=\ell -1 \big)\frac{m}{m+\delta}\big(a^{\chi-1}-1 \big)w^{m+\delta}\frac{\e^{-w}}{\Gamma(m+\delta)}\,dw~.
	}
	Since $d_\emp^{\rm(in)}$ is a mixed-Poisson random variable with mixing distribution $\Gamma_{\emp}\big(A_\emp^{\chi-1}-1\big)$, 
	\eqan{\label{for:lem:size-bias:4}
		\E\big[d_\emp^{\rm(in)}\big]=\E\big[\Gamma_{\emp}\big(A_\emp^{\chi-1}-1\big)\big]=\E[\Gamma_{\emp}]\E\big[A_\emp^{\chi-1}-1\big]=(m+\delta)(1/\chi-1)=m~.}
	Therefore, using the identity $x\Gamma(x)=\Gamma(x+1)$ and \eqref{for:lem:size-bias:4}, we simplify the RHS of \eqref{for:lem:size-bias:3} further as
	\eqan{\label{for:lem:size-bias:5}
		&\int\limits_0^\infty \prob\big(\Poi\big( w(a^{\chi-1}-1) \big)=\ell -1 \big)\frac{m}{m+\delta}\gamma(a)w^{m+\delta}\frac{\e^{-w}}{\Gamma(m+\delta)}\,dw\nn\\
		=&\E\big[ d_\emp^{\rm(in)} \big]\gamma(a)\int\limits_0^\infty \prob\big(\Poi\big( w(a^{\chi-1}-1) \big)=\ell -1 \big)w^{m+\delta}\frac{\e^{-w}}{\Gamma(m+\delta+1)}\,dw\nn\\
		=& \E\big[ d_\emp^{\rm(in)} \big]\gamma(a) \prob\big(\Poi\big( \Gamma_{\rm{in}}(m+1)(a^{\chi-1}-1) \big)=\ell -1 \big)~,
	}
	where $\Gamma_{\rm{in}}(m+1)$ is a $\rm{Gamma}$ random variable with parameters $m+\delta+1$ and $1$. Note that, based on the construction of the \polya\ point tree, conditionally on $\tilde{\emp} = (a, \Old)$, $d_{\tilde{\emp}}^{\rm(in)}$ is a mixed-Poisson random variable with mixing distribution $\Gamma_{\tilde{\emp}}(a^{\chi-1} - 1)$, where $\Gamma_{\tilde{\emp}}$ is a $\rm{Gamma}$ random variable with parameters $m + \delta + 1$ and $1$. Therefore,
	\begin{equation}
		\label{for:lem:size-bias:6}
		d_{\tilde{\emp}}^{\rm(in)} \mid \big\{ \tilde{\emp} = (a, \Old) \big\} \quad \overset{d}{=} \quad \Poi\big( \Gamma_{\rm{in}}(m+1)(a^{\chi-1} - 1) \big)~.
	\end{equation}
	Hence, the lemma follows immediately from \eqref{for:lem:size-bias:1}, \eqref{for:lem:size-bias:3}, \eqref{for:lem:size-bias:5}, and \eqref{for:lem:size-bias:6}.
\end{proof}
\subsection{Convergence of thermodynamic quantities}\label{sec:thermodynamic-limit:subsec:convergence:thermodynamic-quantity}
We now proceed to prove Theorem~\ref{thm:limit:thermodynamic-quantities}. To prove this theorem, we utilise the fact that $\varphi(\beta, B)$ is a convex function with respect to both $\beta$ and $B$. The following lemma establishes this convexity property of $\varphi$:

\medskip
\begin{lemma}[Convexity of $\psi_n(\beta, B)$]\label{lem:convexity:pressure-limit}
	For any $\beta \in\R$ and $B \in \mathbb{R}$, let $\psi_n(\beta, B)$ be as defined in Theorem~\ref{thm:thermodynamic-limit:pressure}. Then, both $\beta \mapsto \psi_n(\beta, B)$ and $B \mapsto \psi_n(\beta, B)$ are convex functions of $\beta$ and $B$, respectively.
\end{lemma}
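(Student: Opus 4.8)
The plan is to prove the convexity of $\beta\mapsto\psi_n(\beta,B)$ and $B\mapsto\psi_n(\beta,B)$ directly from the definition of $\psi_n$, using the standard fact that the log of a partition function is convex in its linear parameters. Recall from \eqref{eq:def:pressure-per-particle} that $\psi_n(\beta,B)=\tfrac1n\log Z_n(\beta,B)$ with $Z_n(\beta,B)=\sum_{\boldsymbol\sigma}\exp\{\beta H_1(\boldsymbol\sigma)+B H_2(\boldsymbol\sigma)\}$, where $H_1(\boldsymbol\sigma)=\sum_{\{u,v\}\in E_n}\sigma_u\sigma_v$ and $H_2(\boldsymbol\sigma)=\sum_{v\in V_n}\sigma_v$. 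Since $Z_n$ is a finite sum of exponentials of affine functions of $(\beta,B)$, it is a moment generating function, and the claim is an instance of the general principle that $\theta\mapsto\log\sum_i \mathrm{e}^{\langle\theta,x_i\rangle}$ is convex.

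First I would fix $B$ and compute $\partial_\beta\psi_n=\tfrac1n\langle H_1\rangle_{\mu_n}$ and $\partial_\beta^2\psi_n=\tfrac1n\big(\langle H_1^2\rangle_{\mu_n}-\langle H_1\rangle_{\mu_n}^2\big)=\tfrac1n\mathrm{Var}_{\mu_n}(H_1)\ge 0$, where the variance is taken with respect to the Boltzmann measure $\mu_n$ in \eqref{eq:def:boltzmann:seq}. This gives convexity in $\beta$. Symmetrically, fixing $\beta$ and differentiating twice in $B$ yields $\partial_B^2\psi_n=\tfrac1n\mathrm{Var}_{\mu_n}(H_2)\ge 0$, giving convexity in $B$. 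Since $V_n=[n]$ is finite and the sum defining $Z_n$ has finitely many terms (and $Z_n>0$), differentiation under the summation sign is trivially justified, so these computations are fully rigorous with no interchange-of-limit issues; indeed $\psi_n$ is real-analytic in $(\beta,B)$.

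Alternatively, and perhaps more cleanly for the write-up, I would invoke Hölder's inequality directly: for $\lambda\in[0,1]$ and parameters $\beta_0,\beta_1$ (with $B$ fixed), writing $\beta_\lambda=\lambda\beta_1+(1-\lambda)\beta_0$,
\[
Z_n(\beta_\lambda,B)=\sum_{\boldsymbol\sigma}\Big(\mathrm{e}^{\beta_1 H_1(\boldsymbol\sigma)+B H_2(\boldsymbol\sigma)}\Big)^{\lambda}\Big(\mathrm{e}^{\beta_0 H_1(\boldsymbol\sigma)+B H_2(\boldsymbol\sigma)}\Big)^{1-\lambda}\le Z_n(\beta_1,B)^{\lambda}\,Z_n(\beta_0,B)^{1-\lambda},
\]
and taking $\tfrac1n\log$ of both sides gives $\psi_n(\beta_\lambda,B)\le\lambda\psi_n(\beta_1,B)+(1-\lambda)\psi_n(\beta_0,B)$. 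The identical argument with the roles of the two linear terms swapped gives convexity in $B$.

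There is essentially no main obstacle here; the statement is a soft, finite-dimensional fact. The only point worth a sentence of care is that the lemma asserts convexity of $\psi_n$ (the finite-$n$ pressure), not of the limit $\varphi$, so no appeal to local convergence or to Theorem~\ref{thm:thermodynamic-limit:pressure} is needed at this stage; convexity of $\varphi=\lim_n\psi_n$ will then follow in the next step as a pointwise limit of convex functions. I would therefore keep the proof to the Hölder one-liner above (or, equivalently, the second-derivative-equals-variance computation), and remark that joint convexity in $(\beta,B)$ holds as well by the same Hölder argument applied to the pair, though only the separate convexities are used in the proof of Theorem~\ref{thm:limit:thermodynamic-quantities}.
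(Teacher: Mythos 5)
Your proposal is correct and matches the paper's argument: the paper's appendix proof likewise identifies $\tfrac{\partial^2}{\partial\beta^2}\psi_n$ as $\tfrac1n\var_{\mu_n}\big(\sum_{\{i,j\}\in E_n}\sigma_i\sigma_j\big)\geq 0$ (with the $B$ case identical), and the paper's remark that $\psi_n$ is the log of a moment generating function is exactly your general principle. The Hölder variant you add is a standard equivalent reformulation, not a genuinely different route.
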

Proof to this lemma is trivial, since \(B\mapsto \psi_n(\beta,B)\) and \(\beta\mapsto \psi_n(\beta,B)\) can be interpreted as logs of moment generating functions.
To obtain the explicit expressions in Theorem~\ref{thm:limit:thermodynamic-quantities}, we use the following lemma. This lemma relies on the fact that preferential attachment models locally converge in probability to the P\'{o}lya point tree \cite[Theorem 1.1]{GHHR22}.

\medskip
\begin{lemma}\label{lem:limiting:edge-measure}
	Let $E_n$ denote the edge set of the preferential attachment model of size $n$ (denoted by $G_n$), and let $\mu_n$ denote the Boltzmann distribution on $G_n$. Then,
	\begin{align}
		\frac{1}{|E_n|}\sum_{\{i, j\} \in E_n}\langle \sigma_i \sigma_j \rangle_{\mu_n} &\overset{\prob}{\to} \E\left[ \frac{\tanh(\beta) + \tanh(\h(\bff(\hat{\emp}))) \tanh(\h(\bff(\hat{\emp }1)))}{1 + \tanh(\beta) \tanh(\h(\bff(\hat{\emp}))) \tanh(\h(\bff(\hat{\emp }1)))} \right]~, \label{eq:lem:limiting:edge-measure-1} \\
		\text{and} \qquad \frac{1}{n}\sum_{v \in [n]}\langle \sigma_v \rangle_{\mu_n} &\overset{\prob}{\to} \E[\tanh(\h(\bff(\emp)))]~. \label{eq:lem:limiting:edge-measure-2}
	\end{align}
\end{lemma}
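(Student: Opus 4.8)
The plan is to treat the two displays separately. The magnetisation statement \eqref{eq:lem:limiting:edge-measure-2} requires no new argument: differentiating the partition function gives $\frac1n\sum_{v\in[n]}\langle\sigma_v\rangle_{\mu_n}=\frac{\partial}{\partial B}\psi_n(\beta,B)$, which converges in probability to $\E[\langle\sigma_\emp\rangle_{\mu}]$ by Proposition~\ref{prop:mag:lim}, while $\langle\sigma_\emp\rangle_{\mu}=\tanh(\h(\bff(\emp)))$ almost surely by Lemma~\ref{lem:h:magnetization} applied to $\Tree=\PPT(m,\delta)$ rooted at $\emp$ (for $B>0$; the cases $B\le 0$ follow from the spin-flip symmetry of $\mu_n$). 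So the content lies in \eqref{eq:lem:limiting:edge-measure-1}, which I would obtain from local convergence around a uniformly chosen \emph{edge}. Writing $\frac1{|E_n|}\sum_{\{i,j\}\in E_n}\langle\sigma_i\sigma_j\rangle_{\mu_n}=\E\big[\langle\sigma_{e_n^{-}}\sigma_{e_n^{+}}\rangle_{\mu_n}\mid G_n\big]$, where $e_n=\{e_n^{-},e_n^{+}\}$ is uniform over $E_n$, I note that since every vertex is created with exactly $m$ out-edges, a uniformly chosen edge is — up to an asymptotically negligible fraction — the $I$-th out-edge of a uniformly chosen vertex, with $I\sim\Unif[m]$ independent of the graph; since a uniform vertex of $\PA_n$ converges locally to the root of $\PPT(m,\delta)$ \cite{GHHR22}, and the radius-$\ell$ ball around an edge sits inside the radius-$(\ell+1)$ ball around an endpoint, the neighbourhood of $e_n$ converges in distribution to that of the edge $\{\emp,\emp I\}$ in the P\'olya point tree.

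The observable $\langle\sigma_i\sigma_j\rangle_{\mu_n}$ is not local, so — exactly as in the proof of Proposition~\ref{prop:mag:lim} — I would use the GKS inequality (Lemma~\ref{lem:GKS}) to sandwich it, $\langle\sigma_i\sigma_j\rangle^{f}_{B_{e_n^{-}}(\ell)}\le\langle\sigma_i\sigma_j\rangle_{\mu_n}\le\langle\sigma_i\sigma_j\rangle^{+}_{B_{e_n^{-}}(\ell)}$, between edge-correlations that, for fixed $\ell$, depend only on the $\ell$-ball and are bounded by $1$. By the edge-rooted local convergence, the expectations of these bounds converge in probability to the corresponding $\PPT$-quantities, and by \cite[Lemma~3.1]{DGvdH10} together with $\nu^{+}_{\Tree}=\nu^{f}_{\Tree}$ almost surely \cite[Lemma~5.15]{RvdHSF}, the two bounds have the same limit as $\ell\to\infty$. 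Hence
\[
\frac1{|E_n|}\sum_{\{i,j\}\in E_n}\langle\sigma_i\sigma_j\rangle_{\mu_n}\;\overset{\prob}{\longrightarrow}\;\E\big[\langle\sigma_{\emp}\sigma_{\emp I}\rangle_{\mu}\big]=\E\big[\langle\sigma_{\emp}\sigma_{\emp 1}\rangle_{\mu}\big],
\]
the last equality by exchangeability of the $\Old$-labelled children of the root (see \eqref{for:lem:old:distributional:equivalence:1}).

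It then remains to identify $\langle\sigma_{\emp}\sigma_{\emp 1}\rangle_{\mu}$. Deleting the edge $\{\emp,\emp 1\}$ splits $\Tree$ into $\Tree_{\emp\to\emp 1}$ and $\Tree_{\emp 1\to\emp}$; pruning each side onto this edge (Lemma~\ref{lem:tree-pruning}) reduces the joint law of $(\sigma_\emp,\sigma_{\emp 1})$ to a single edge with effective external fields, and the elementary two-site computation gives
\[
\langle\sigma_{\emp}\sigma_{\emp 1}\rangle_{\mu}=\frac{\tanh(\beta)+\magn{\to\emp 1}\,\magn{\emp 1\to}}{1+\tanh(\beta)\,\magn{\to\emp 1}\,\magn{\emp 1\to}},
\]
where $\magn{\to\emp 1}$ and $\magn{\emp 1\to}$ are the root magnetisations of $\Tree_{\emp\to\emp 1}$ and $\Tree_{\emp 1\to\emp}$, respectively. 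By Lemma~\ref{lem:h:magnetization} together with \eqref{eq:test:2} and \eqref{eq:test:3}, $\magn{\to\emp 1}=\tanh(\h_{-1}(\bff(\emp)))$ and $\magn{\emp 1\to}=\tanh(\h(\bff(\emp 1)))$; and by Lemma~\ref{lem:old:distributional:equivalence}, $(\h_{-1}(\bff(\emp)),\h(\bff(\emp 1)))\overset{d}{=}(\h(\bff(\hat{\emp})),\h(\bff(\hat{\emp}1)))$. Substituting this into the previous display shows that $\E\big[\langle\sigma_{\emp}\sigma_{\emp 1}\rangle_{\mu}\big]$ equals the right-hand side of \eqref{eq:lem:limiting:edge-measure-1}, which completes the plan.

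The step I expect to be the main obstacle is the local-limit passage for the non-local observable $\langle\sigma_i\sigma_j\rangle_{\mu_n}$: one has to verify that the vertex-level local convergence of \cite{GHHR22} transfers to convergence around a uniformly chosen edge, and then control the error between $\langle\sigma_i\sigma_j\rangle_{\mu_n}$ and its finite-depth $\pm$/free approximants uniformly enough to interchange $n\to\infty$ and $\ell\to\infty$ — the same delicate point handled for the magnetisation in Proposition~\ref{prop:mag:lim} via \cite[Lemma~3.1]{DGvdH10}. Identifying the cavity magnetisations through Lemma~\ref{lem:h:magnetization} and the tree-pruning formula, and the final distributional rewriting through Lemma~\ref{lem:old:distributional:equivalence}, are by comparison routine. (One could instead identify the limit by differentiating the explicit formula \eqref{eq:thm:thermodynamic-limit:pressure:explicit} for $\varphi$ in $\beta$, using Lemma~\ref{lem:convexity:pressure-limit} to pass the derivative to the limit; but then the terms produced by $\partial\h/\partial\beta$ must be shown to vanish by the stationarity of the Bethe functional at the belief-propagation fixed point, which is no easier.)
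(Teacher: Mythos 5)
Your proposal is correct and follows essentially the same route as the paper's proof: reinterpret the edge average as a uniformly chosen out-edge of a uniform vertex, sandwich the non-local correlation via GKS between finite-depth free/plus approximants, pass to the P\'olya point tree by local convergence, and identify the limit by tree pruning to a two-site model with cavity fields, using Lemma~\ref{lem:old:distributional:equivalence} to rewrite the answer in terms of $(\hat{\emp},\hat{\emp}1)$. The only cosmetic difference is that you obtain \eqref{eq:lem:limiting:edge-measure-2} directly from Proposition~\ref{prop:mag:lim} and Lemma~\ref{lem:h:magnetization} rather than rerunning the sandwich argument, which is a legitimate shortcut since Proposition~\ref{prop:mag:lim} is proved independently.
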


The proof of Lemma~\ref{lem:limiting:edge-measure} follows similarly to the proof provided in \cite[Proof of Lemma~5.2]{DGvdH10}. Since this proof involves only minor modifications in the context of preferential attachment models, we defer the details to Appendix~\ref{appendix:results}. With Lemmas~\ref{lem:convexity:pressure-limit} and \ref{lem:limiting:edge-measure} in place, we now proceed to prove Theorem~\ref{thm:limit:thermodynamic-quantities}:

\begin{proof}[Proof of Theorem~\ref{thm:limit:thermodynamic-quantities}]
	First, we prove the theorem for the internal energy. 
	To prove the convergence part, we use the convexity argument from Lemma~\ref{lem:convexity:pressure-limit}. Since $\beta \mapsto \psi_n(\beta, B)$ is a convex function in $\beta$, for any $\vep > 0$,
	\begin{equation}\label{for:prop:limiting:ie:01}
		\frac{1}{\vep}\left[ \psi_n(\beta, B) - \psi_n(\beta - \vep, B) \right] \leq \frac{\partial}{\partial \beta} \psi_n(\beta, B) \leq \frac{1}{\vep}\left[ \psi_n(\beta + \vep, B) - \psi_n(\beta, B) \right]~.
	\end{equation}
	Taking limits as $n \to \infty$ in \eqref{for:prop:limiting:ie:01}, and using Theorem~\ref{thm:thermodynamic-limit:pressure}, we obtain, for any $\vep > 0$,
	\begin{align}
		&\lim_{n \to \infty} \frac{1}{\vep} \left[ \psi_n(\beta, B) - \psi_n(\beta - \vep, B) \right] \overset{\prob}{\to} \frac{1}{\vep} \left[ \varphi(\beta, B) - \varphi(\beta - \vep, B) \right] \label{for:prop:limiting:ie:02}\\
		\text{and}\quad &\lim_{n \to \infty} \frac{1}{\vep} \left[ \psi_n(\beta + \vep, B) - \psi_n(\beta, B) \right] \overset{\prob}{\to} \frac{1}{\vep} \left[ \varphi(\beta + \vep, B) - \varphi(\beta, B) \right]~. \label{for:prop:limiting:ie:03}
	\end{align}
	Since the inequality in \eqref{for:prop:limiting:ie:01} holds for any $\vep > 0$, we take the limit $\vep \to 0$ to obtain $\frac{\partial}{\partial \beta} \varphi(\beta, B)$ as the limit of the RHS of both \eqref{for:prop:limiting:ie:02} and \eqref{for:prop:limiting:ie:03}. Therefore, by \eqref{for:prop:limiting:ie:01}, \eqref{for:prop:limiting:ie:02}, and \eqref{for:prop:limiting:ie:03}, 
	\begin{equation}\label{for:prop:limiting:ie:04}
		\frac{\partial}{\partial \beta} \psi_n(\beta, B) \overset{\prob}{\to} \frac{\partial}{\partial \beta} \varphi(\beta, B)~.
	\end{equation}
	We now need to prove the second part of \eqref{eq:def:internal:energy}. To do this, we follow the same strategy as in \cite[Lemma~5.2]{DGvdH10}. Let $E_n$ denote the edge set of the preferential attachment graph of size $n$. We simplify
	\begin{equation}\label{for:prop:limiting:ie:05}
		\frac{\partial}{\partial \beta} \psi_n(\beta, B) = \frac{1}{n} \sum_{(i, j) \in E_n} \langle \sigma_i \sigma_j \rangle_{\mu_n} = \frac{|E_n|}{n} \cdot \frac{1}{|E_n|} \sum_{(i, j) \in E_n} \langle \sigma_i \sigma_j \rangle_{\mu_n}~.
	\end{equation}
	Therefore, using \eqref{for:prop:limiting:ie:05} and Lemma~\ref{lem:limiting:edge-measure}, we obtain
	\begin{equation}\label{for:prop:limiting:ie:06}
		\frac{\partial}{\partial \beta} \psi_n(\beta, B) \overset{\prob}{\to} m \E \left[ \frac{\tanh(\beta) + \tanh(\h(\bff(\hat{\emp}))) \tanh(\h(\bff(\hat{\emp} 1)))}{1 + \tanh(\beta) \tanh(\h(\bff(\hat{\emp}))) \tanh(\h(\bff(\hat{\emp} 1)))} \right]~,
	\end{equation}
	proving Theorem~\ref{thm:internal:energy} (b).
	Proposition~\ref{prop:mag:lim} and Lemma~\ref{lem:h:magnetization}, completes the proof of Theorem~\ref{thm:magnetization} (a).
\end{proof}
\begin{Remark}[Similarity with configuration model result]
	\rm Note that, from the description of the P\'{o}lya point tree, the expected degree of the root in the P\'{o}lya point tree is $2m$. Therefore, substituting $m$ with $\E[D(\emp)]/2$ in \eqref{eq:def:internal:energy}, we obtain a result similar in flavour to that obtained in \cite[Corollary~1.6(b)]{DGvdH10} for the configuration model. Similarly, using the distributional recursion property of the $\h$ random variables in \eqref{eq:distributional-recursion:1}, it can be shown that the explicit expression in \eqref{eq:def:magnetization} is the same as the one obtained in \cite[Corollary~1.6(a)]{DGvdH10}.\hfill$\blacksquare$
\end{Remark}

\section{Inverse critical temperature}\label{sec:inv:critical-temperature}
In this section, we prove Theorem~\ref{thm:inv:critical-temp:PA}, which identifies the critical temperature for the phase transition in the Ising model on preferential attachment models. The proof of Theorem~\ref{thm:inv:critical-temp:PA} is based on the inverse critical temperature for the P\'{o}lya point tree.

We first define the inverse critical temperature for a rooted tree.
Let $\mu^{\sss (\beta,B)}$ denote the Boltzmann distribution on a tree $(\Tree, o)$ with inverse temperature parameter $\beta$ and external magnetic field $B$. Define
\begin{equation}
	\label{eq:def:Boltzmann:zero-mag-field}
	\mu^{\sss (\beta,0^+)}(\cdot) = \lim_{B \searrow 0} \mu^{\sss (\beta,B)}(\cdot)~.
\end{equation}
The \emph{inverse critical temperature} for a rooted tree $(\Tree, o)$, denoted $\beta_c(\Tree, o)$, is defined as
\begin{equation}
	\label{eq:def:inverse-critical-temperature}
	\beta_c(\Tree, o) = \inf\left\{ \beta \mid \mu^{\sss (\beta,0^+)}(\sigma_o = +1) - \mu^{\sss (\beta,0^+)}(\sigma_o = -1) > 0 \right\}~.
\end{equation}

The following proposition identifies the inverse critical temperature for the P\'{o}lya point tree with parameters $m \geq 2$ and $\delta > 0$:

\medskip
\begin{Proposition}[Inverse critical temperature for the PPT]
	\label{prop:inv:critical-temp:PPT}
	Fix $m \in \mathbb{N} \setminus \{1\}$ and $\delta > -m$. Then, the inverse critical temperature for the P\'{o}lya point tree with parameters $m$ and $\delta$ is given by $\beta_c(m, \delta)$ as defined in Theorem~\ref{thm:inv:critical-temp:PA}.
\end{Proposition}
\smallskip
First, we prove Theorem~\ref{thm:inv:critical-temp:PA} using Proposition~\ref{prop:inv:critical-temp:PPT}, and then we proceed to prove Proposition~\ref{prop:inv:critical-temp:PPT} in detail.

\begin{proof}[Proof of Theorem~\ref{thm:inv:critical-temp:PA} subject to Proposition~\ref{prop:inv:critical-temp:PPT}]
	Proposition~\ref{prop:inv:critical-temp:PPT} identifies $\beta_c(m,\delta)$ as the inverse critical temperature for the P\'{o}lya point tree with parameters $m$ and $\delta>-m$. Therefore,
	\begin{align}
		\mu^{\sss (\beta,0^+)}(\sigma_o=+1) - \mu^{\sss (\beta,0^+)}(\sigma_o=-1) &> 0, \quad \text{for all } \beta > \beta_c(m,\delta), \label{for:thm:PA:inv-temp:1-1} \\
		\text{and} \quad \mu^{\sss (\beta,0^+)}(\sigma_o=+1) - \mu^{\sss (\beta,0^+)}(\sigma_o=-1) &= 0, \quad \text{for all } \beta < \beta_c(m,\delta). \label{for:thm:PA:inv-temp:1-2}
	\end{align}
	Note that, by Theorem~\ref{thm:magnetization} and Lemma~\ref{lem:h:magnetization},
	\begin{equation}\label{for:thm:PA:inv-temp:2}
		\mu^{\sss (\beta,0^+)}(\sigma_o=+1) - \mu^{\sss (\beta,0^+)}(\sigma_o=-1) = \lim_{B \searrow 0} \mathbb{E}[\magn{\emp \to}] = \lim_{B \searrow 0} M(\beta,B).
	\end{equation}
	Therefore, from \eqref{for:thm:PA:inv-temp:1-1}, \eqref{for:thm:PA:inv-temp:1-2}, and \eqref{for:thm:PA:inv-temp:2}, we obtain
	\begin{equation}\label{for:thm:PA:inv-temp:3}
		\beta_c(m,\delta) = \inf \Big\{ {\beta}: \lim_{B \searrow 0} M(\beta,B) > 0 \Big\},
	\end{equation}
	proving that $\beta_c(m,\delta)$ is the inverse critical temperature for preferential attachment models with parameters $m$ and $\delta$.
\end{proof}

The proof of Proposition~\ref{prop:inv:critical-temp:PPT} follows from \cite[Theorem~2.1]{Lyo89}, with adaptations to our specific setting. The proof employs concepts such as the branching number and the growth of trees. We recall the definitions of the branching number and the growth of trees as established in \cite{Lyo89,Lyo90}, using cut-set. In context of trees, a \emph{cut-set} is a set of vertices of the tree such that it  has non-zero intersection with any infinite path starting at the root.   

\medskip
\begin{definition}[Branching number and growth of tree]\label{def:branching-number:growth}
	The \emph{branching number} of a tree $\Tree$, denoted by $\rm{br}(\Tree)$, is defined as
	\eqn{
	\rm{br}(\Tree)=~\inf\Big\{ \lambda>0:\liminf\limits_{\Pi\to\infty}\sum\limits_{u\in\Pi} \lambda^{-|u|}=0 \Big\}~,
	}
	where \(\Pi\) is a cut-set of the tree \(\Tree\), and \(\Pi\to \infty\) implies the fact that \( \inf\{{\rm d}(o,v):v\in\Pi\}\to \infty\). It can also be expressed as 
	\eqn{\label{eq:def:branching-number}
	\rm{br}(\Tree)=~\sup\Big\{ \lambda>0:\liminf\limits_{\Pi\to\infty}\sum\limits_{u\in\Pi} \lambda^{-|u|}=\infty \Big\}
	=~\inf\Big\{ \lambda>0:\inf\limits_{\Pi}\sum\limits_{u\in\Pi} \lambda^{-|u|} \Big\}
	~,}
	whereas the \emph{growth} of $\Tree$, denoted by $\rm{gr}(\Tree)$, is defined as
	\eqn{\label{eq:def:growth}
	{\rm{gr}}(\Tree)=\inf\Big\{ \lambda>0:\liminf\limits_{n\to \infty} {M_n}\lambda^{-n}=0\Big\}=\liminf\limits_{n\to\infty}M_n^{1/n}
	~,}
	where $M_n$ is the size of the $n$-th generation in the tree $\Tree$. \hfill\(\blacksquare\)
\end{definition}
\smallskip
The notation used in \cite{Lyo89} differs slightly from that used here. Adapting to our notation, we can deduce from \cite[Theorem~2.1]{Lyo89} that
\begin{equation}\label{for:thm:inv:critical-temp:1}
	\tanh\big(\beta_c(m,\delta)\big) = \frac{1}{{\rm{br}}\big(\PPT(m,\delta)\big)},
\end{equation}
where ${\rm{br}}\big(\PPT(m,\delta)\big)$ is the branching number of the P\'{o}lya point tree with parameters $m$ and $\delta$. We now identify the explicit expression for ${\rm{br}}\big(\PPT(m,\delta)\big)$. We prove and use the following proposition, which is similar in spirit to \cite[Proposition~6.4]{Lyo89}:
\medskip
\begin{Proposition}[Bound for branching number of the P\'{o}lya point tree]\label{prop:branching-number:PPT}
	For $m \geq 2$ and $\delta > -m$,
	\begin{equation}\label{eq:prop:branching-number:PPT}
		\frac{1}{{\rm{br}}\big(\PPT(m,\delta)\big)} \leq \pi_c (m,\delta)~\mbox{a.s.},
	\end{equation}
	where $\pi_c (m,\delta)$ is the critical percolation threshold for \(\PPT(m,\delta)\), defined in Section~\ref{sec:RPPT}.
\end{Proposition}
\medskip
\begin{proof}[Proof of Proposition~\ref{prop:branching-number:PPT}]
To prove the proposition, we follow a strategy similar to that in \cite[Theorem~6.2]{Lyo89}. \cite[Theorem~1.2]{HHR23} shows that for $m \geq 2$ and $\delta > 0$, the critical percolation threshold for $\PPT(m,\delta)$ is ${1}/{r(\bfT_{\kappa})}$. We now prove that $\PPT(m,\delta)$ almost surely dies out when percolated with $\pi < {1}/{{\rm{br}}\big(\PPT(m,\delta)\big)}$. Therefore, the critical percolation threshold of $\PPT(m,\delta)$ is at least ${1}/{{\rm{br}}\big(\PPT(m,\delta)\big)}$, completing the proof of Proposition~\ref{prop:branching-number:PPT}.

To reduce notational complexity, we shall refer $\PPT(m,\delta)$ by $\Tree$ in this proof, and for any $\pi\in[0,1]$, we use $\Tree(\pi)$ to denote the $\pi$ percolated $\PPT(m,\delta)$.
	For any cutset $\Pi$ and $\lambda>0$, let
	\eqn{\label{for:prop:branching-number:1}
		Z_{\Pi}(\lambda)=\sum\limits_{u\in\Pi}\lambda^{-|u|}~,}
	and for any $\pi\in[0,1],$ define
	\eqn{\label{for:prop:branching-number:2}
		Z_{\Pi(\pi)}(\lambda)=\sum\limits_{u\in\Pi(\pi)}\lambda^{-|u|}~,}
	where $\Pi(\pi)=\Pi\cap\Tree(\pi).$ Then,
	\eqn{\label{for:prop:branching-number:3}
		\E_{\pi}\Big[ Z_{\Pi(\pi)}(1) \Big]=\sum\limits_{u\in\Pi} \prob_{\Tree}\big( u\in\Tree(\pi) \big)=\sum\limits_{u\in \Pi}\pi^{|u|}=Z_{\Pi}(1/\pi)~,}
	where $\E_{\pi}$ is the expectation with respect to the percolation.
	If $1/\pi>\br{\Tree}$, then there exists a sequence $\Pi_n\to\infty$ such that $Z_{\Pi_n}(1/\pi)\to0.$ Hence, from Fatou's lemma applied to \eqref{for:prop:branching-number:3}, we obtain conditionally on $\Tree$,
	\eqn{\label{for:prop:branching-number:4}
		\liminf\limits_{n\to\infty} Z_{\Pi_n(\pi)}(1)=0\quad \text{a.s.}}
	Therefore, conditionally on $\Tree,~\Tree(\pi)$ is finite almost surely. From definition of the critical percolation threshold $\pi_c(m,\delta)$ in \cite{HHR23}, we can upper bound the random variable $1/\br{\Tree}$ by $\pi_c$. 
\end{proof}

\begin{remark}[Lower bound of \(\br{\PPT(m,\delta)}\)]
\rm \cite[Theorem~1.2]{HHR23} further identifies that for \(\delta>0\), $\pi_c(m,\delta)$ of P\'{o}lya point tree is $1/r(\bfT_{\kappa})$, whereas for \(\delta\leq 0,~\pi_c(m,\delta)\) is \(0\). Hence, for \(\delta>0\)
\eqn{\label{for:prop:branching-number:5}
		\frac{1}{\br{\Tree}}\leq \pi_c (m,\delta)=\frac{1}{r(\bfT_{\kappa})}\quad\text{a.s.}~,}
 and for \(\delta\leq 0,~1/{\br{\PPT(m,\delta)}}=0\). Therefore, for \(\delta>0\),  we obtain \(r(\bfT_{\kappa})\) as the almost sure lower bound of $\br{\Tree}$. \hfill\(\blacksquare\)
\end{remark}
\smallskip
Lemma~\ref{lem:growth:PPT} proves that $\gr{\Tree}$ is at most $r(\bfT_{\kappa})$ almost surely, which serves as an almost sure upper bound for $\br{\Tree}$ as well. Hence, $\br{\PPT(m,\delta)}$ is $r(\bfT_{\kappa})$ almost surely.
On the other hand, based on the definitions of growth and branching numbers for a tree, it follows that ${\rm{br}}\big(\PPT(m,\delta)\big)\leq{\rm{gr}}\big(\PPT(m,\delta)\big)$. The next lemma computes an upper bound for ${\rm{gr}}\big(\PPT(m,\delta)\big)$:
\medskip
\begin{lemma}[Growth of the P\'{o}lya point tree]\label{lem:growth:PPT}
	For $m \geq 2$ and $\delta > 0$,
	\begin{equation}\label{eq:lem:growth:PPT}
		{\rm{gr}}\big(\PPT(m,\delta)\big) \leq r(\bfT_{\kappa}) \text{ a.s.}
	\end{equation}
\end{lemma}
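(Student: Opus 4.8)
\textbf{Proof proposal for Lemma~\ref{lem:growth:PPT}.}
The plan is to estimate the expected size of the $n$-th generation of $\PPT(m,\delta)$ and then apply a Borel--Cantelli-type argument together with the definition of growth in \eqref{eq:def:growth}. First I would set up the mean-offspring operator for the P\'olya point tree. Recall that a node of label $\Old$ produces $m$ $\Old$-children (whose ages are $U_i^{1/\chi}$ times its age) and a mixed-Poisson number of $\Young$-children with the intensity $\rho_\omega$ in \eqref{for:pointgraph:poisson}, with the mean number of $\Young$-children governed by a Gamma strength; a node of label $\Young$ does the same but with $m-1$ $\Old$-children and a size-biased Gamma strength. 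Writing $\bff(\omega)=(a,s)$ for the type, I would introduce the integral operator $T_\kappa$ acting on functions of the age/label type whose kernel $\kappa\big((a,s),(a',s')\big)$ is the expected number of children of type near $(a',s')$ for a parent of type $(a,s)$; this is exactly the operator $\bfT_\kappa$ referenced in \cite{HHR23} through its spectral radius $r(\bfT_\kappa)$. The key identity is then
\[
	\E\big[ M_n \big] \;=\; \int \kappa^{(n)}\big(\bff(\emp),\cdot\big)\,,
\]
i.e.\ the expected generation sizes are governed by the iterated kernel, so that $\E[M_n]^{1/n}\to r(\bfT_\kappa)$ (up to the subtlety that the root type is special, contributing only a bounded multiplicative correction).

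Next I would make the operator-norm growth statement precise. Since $\kappa$ is a nonnegative kernel on a compact-ish type space and $r(\bfT_\kappa)$ is its spectral radius, for every $\vep>0$ there is a constant $C_\vep<\infty$ such that the iterated kernel satisfies $\int \kappa^{(n)}(\bff(\emp),\cdot) \le C_\vep\, (r(\bfT_\kappa)+\vep)^n$ for all $n$; here I would lean on the boundedness/integrability properties of $\kappa$ that are already verified in \cite{HHR23} when they compute $r(\bfT_\kappa)$, so I may quote them rather than re-derive. This yields $\E[M_n] \le C_\vep (r(\bfT_\kappa)+\vep)^n$. By Markov's inequality, $\prob\big( M_n \ge (r(\bfT_\kappa)+2\vep)^n \big) \le C_\vep\big(\tfrac{r(\bfT_\kappa)+\vep}{r(\bfT_\kappa)+2\vep}\big)^n$, which is summable in $n$. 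Borel--Cantelli then gives that, almost surely, $M_n \le (r(\bfT_\kappa)+2\vep)^n$ for all large $n$, whence $\liminf_{n\to\infty} M_n^{1/n} \le r(\bfT_\kappa)+2\vep$. Letting $\vep\downarrow 0$ along a countable sequence gives $\gr{\PPT(m,\delta)} = \liminf_n M_n^{1/n} \le r(\bfT_\kappa)$ almost surely, which is \eqref{eq:lem:growth:PPT}.

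The main obstacle I anticipate is the bookkeeping around the mixed and partly continuous type space: the strength $\Gamma_\omega$ is an extra latent variable (Gamma with parameter depending on the $\Old/\Young$ label), and the number of $\Young$-children is mixed-Poisson with a random, age-dependent mean, so one must be careful to take expectations in the right order and confirm that the resulting mean kernel is exactly the $\bfT_\kappa$ whose spectral radius $\cite{HHR23}$ computes --- in particular that integrating out $\Gamma_\omega$ produces a finite kernel with no blow-up as the age $a\downarrow 0$ (the factor $A_\emp^{\chi-1}-1$ diverges there, but it is integrated against the age density and against the Gamma mean, which tames it, exactly as in the computation $\E[d_\emp^{\rm (in)}]=m$ in \eqref{for:lem:size-bias:4}). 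A secondary point is that the root has its own distribution (age $\Unif[0,1]$, $m_-(\emp)=m$, no $\Old/\Young$ label), so $\E[M_1]$ and hence the prefactor in the bound must be handled as a special base case; this only affects the constant $C_\vep$ and not the exponential rate, so it does not threaten the argument. Once the mean kernel is identified with $\bfT_\kappa$, the rest is the standard first-moment/Borel--Cantelli route to an upper bound on the growth.
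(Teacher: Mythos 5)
Your proposal follows the same first-moment route as the paper: bound $\E[M_n]$ by $(\text{const})\cdot r(\bfT_{\kappa})^n$ via the mean-offspring operator, treat the unlabelled root as a bounded correction (the paper does this via the decomposition $M_n(\bff(\emp))\overset{d}{=}M_n(U_1,\Young)+M_{n-1}(U_1U_2^{1/\chi},\Old)$), and deduce the bound on $\liminf_n M_n^{1/n}$. The one substantive difference is how the exponential bound on $\E[M_n]=\langle \one_{(x,s)},\bfT_{\kappa}^n\ubar{\one}\rangle$ is obtained. You invoke a Gelfand-type estimate $\|\bfT_{\kappa}^n\|\le C_\vep(r(\bfT_{\kappa})+\vep)^n$; the paper instead compares $\ubar{\one}$ pointwise with the explicit eigenfunction $h(x,s)=\bfp_s/\sqrt{x}$ from \cite{HHR23} and uses positivity of the kernel to get $\bfT_{\kappa}^n\ubar{\one}\le \bfc\,\bfT_{\kappa}^n h= \bfc\, r(\bfT_{\kappa})^n h$ directly, with no $\vep$. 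The eigenfunction route is the safer one here: $r(\bfT_{\kappa})$ in \cite{HHR23} is identified via an eigenfunction on the \emph{extended} type space $[0,\infty)\times\{\Old,\Young\}$, the kernel is singular as the age tends to $0$, and it is not immediate that the spectral radius of $\bfT_{\kappa}$ on a standard function space over $[0,1]\times\{\Old,\Young\}$ (the quantity your $C_\vep$ bound would control) coincides with that $r(\bfT_{\kappa})$; you flag this issue but do not resolve it, so if you take your route you should either prove that identification or simply substitute the eigenfunction comparison. On the other hand, your final step is cleaner than the paper's: passing from $\E[M_n]\le C\,r(\bfT_{\kappa})^n$ to the almost-sure bound via Markov plus Borel--Cantelli and $\vep\downarrow 0$ along a countable sequence is exactly the right argument, whereas the paper's written conclusion goes through $\E[\gr{\PPT(m,\delta)}]$ via Fatou and Jensen, which by itself only bounds the expectation of the growth, not the growth almost surely.
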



\begin{proof}[Proof of Lemma~\ref{lem:growth:PPT}]
	For proving this lemma, we first show that $\E\big[ \gr{\PPT(m,\delta)} \big]\geq r(\bfT_{\kappa})$, and then we prove $\gr{\PPT(m,\delta)}$ is at least $r(\bfT_{\kappa})$ almost surely. 
Let $M_n(x,s)$ denote the number of nodes in $\PPT(m,\delta)$ rooted at $(x,s)\in\Scal$. From the definition in \eqref{eq:def:growth},
	\begin{equation}\label{for:lem:growth:PPT:0}
		\E\big[ \gr{\PPT(m,\delta)} \big] = \E\big[\liminf\limits_{n\to \infty} M_n(\bff(\emp))^{1/n}\big] \leq \liminf\limits_{n\to \infty} \E\Big[ M_n(\bff(\emp))^{1/n} \Big]~,
	\end{equation}
	where the inequality follows from Fatou's lemma. Using the fact that $x\mapsto x^{1/n}$ is a concave function, we apply Jensen's inequality to obtain
	\begin{equation}\label{for:lem:growth:PPT:1}
		\E\big[ \gr{\PPT(m,\delta)} \big] \leq \liminf\limits_{n\to \infty} \E\Big[ M_n(\bff(\emp)) \Big]^{1/n}~.
	\end{equation}
	
	We first aim to show that the LHS of \eqref{for:lem:growth:PPT:1} is upper bounded by $r(\bfT_{\kappa})$. To prove this, we show that
	\begin{equation}\label{for:lem:growth:PPT:2}
		M_n(\bff(\emp)) \overset{d}{=} M_n(U_1, \Young) + M_{n-1}(U_1 U_2^{1/\chi}, \Old)~,
	\end{equation}
	where $U_1, U_2$ are i.i.d.\ $\Unif[0,1]$ random variables. To establish this result, we view the P\'{o}lya point tree rooted at $\emp$ from the perspective of a uniformly chosen out-edge of the root $\emp$. 
	
	Let $\emp u$ be the uniformly chosen $\Old$ neighbour of $\emp$. Let $M^{(-u)}_n(\bff(\emp))$ denote the number of nodes in the $n$-th generation of $\PPT$ rooted at $\emp$, ignoring the size of the sub-tree rooted at $\emp u$. Note that, by the construction of the P\'{o}lya point tree, conditionally on the age of $\emp$, $M_{n-1}(\bff(\emp u))$ and $M^{(-u)}_n(\bff(\emp))$ are independent. Conditionally on the age of $\emp$, $A_\emp=a$, the sub-tree rooted at $\emp$, excluding the sub-tree rooted at $\emp u$, has $m-1$ many $\Old$-labelled children in the first generation. Furthermore, since the strength of any $\Young$-labelled node is identically distributed to the root, it follows that, conditionally on $A_\emp=a$, $\emp$ and $(a, \Young)$ have identical offspring distributions. Hence, conditionally on $A_\emp=a$,
	\begin{equation}\label{for:lem:growth:PPT:3}
		M_n(a, \Young) \overset{d}{=} M^{(-u)}_n(\bff(\emp))~.
	\end{equation}
	
	On the other hand, conditionally on $A_\emp=a$, the age of $\emp U$ is distributed as $a U_2^{1/\chi}$, where $U_2 \sim \Unif[0,1]$ and labelled $\Old$, and is independent of $A_\emp$. Therefore, conditionally on $A_\emp=a$,
	\begin{equation}\label{for:lem:growth:PPT:4}
		M_{n-1}(\bff(\emp u)) \overset{d}{=} M_{n-1}(a U_2^{1/\chi}, \Old)~.
	\end{equation}
	
	Since $A_\emp \sim \Unif[0,1]$ and $M^{(-u)}_n(\bff(\emp))$ and $M_{n-1}(\bff(\emp u))$ are mutually independent and independent of $A_\emp$, we obtain
	\begin{equation}\label{for:lem:growth:PPT:5}
		\left( M_{n-1}(\bff(\emp u)), M^{(-u)}_n(\bff(\emp)) \right) \overset{d}{=} \left( M_{n-1}\big(U_1 U_2^{1/\chi}, \Old\big), M_{n}\big(U_1, \Young\big) \right)~,
	\end{equation}
	where $U_1 \sim \Unif[0,1]$ and is independent of $U_2$. Thus, \eqref{for:lem:growth:PPT:1} follows immediately.
	
Recall that the state-space of the P\'olya point tree is \(\Scal=[0,1]\times\{\Old,\Young\}\cup [0,1]\). The last \([0,1]\) part in the definition of \(\Scal\) arises due to the root having no label. Note that, every node in the P\'olya point tree has type in \( [0,1]\times\{\Old,\Young\} \). Since in the assymptotic computations the effect of root becomes negligible, we shall consider the sub-trees of the P\'olya point tree rooted at the offspring of \(\emp\) from now on. Next, for any $(x,s) \in \Scal_c=[0,1]\times\{\Old,\Young\}$ and $n \in \N$,
	\begin{equation}\label{for:lem:growth:PPT:6}
		\E[M_n(x,s)] = \langle \one_{(x,s)}, \bfT_{\kappa}^n \ubar{\one} \rangle_{\Scal},
	\end{equation}
	where \( \one_{(x,s)} \) puts unit mass at \( (x,s)\in\Scal_c\) and \(0\) elsewhere, and $\ubar{\one}(y,t) \equiv 1$ for all $(y,t) \in \Scal_c$, and $\langle f, g \rangle_{\Scal_c} = \int_{\Scal_c} f(z) g(z) \, dz$ for any functions $f$ and $g$ from $\Scal_c$ to $\R$. Let $\kappa$ denote the mean offspring generator for the P\'{o}lya point tree with parameters $m$ and $\delta$. Recall that $\Scal_c=[0,1]\times\{\Old,\Young\}$. In \cite{HHR23}, together with Hazra, we proved that the integral operator $\bfT_{\kappa}$, with kernel $\kappa$, admits an eigenfunction $h$ corresponding to its spectral norm $r(\bfT_{\kappa})$ in the extended type-space $\Scal_e=[0,\infty)\times\{\Old,\Young\}$. The eigenfunction $h$ is given by
	\[
		h(x,s)=\frac{\bfp_s}{\sqrt{x}},~\mbox{for all }(x,s)\in\Scal_e~, 
	\]
	where $\bfp=(\bfp_{\old},\bfp_{\young})$ is as defined in \cite[Theorem~2.5]{HHR23}.
	
	Since $\kappa(u,v) \geq 0$ for all $u, v \in \Scal_c$, it can further be shown that
	$\bfT_{\kappa} f(u) \geq \bfT_{\kappa} g(u)$ for all $u \in \Scal_c$, if $f(v) \geq g(v)$ for all $v \in \Scal_c$. Note that, $h(x,s) \geq 1$, for all $(x,s) \in \Scal_c$, where $\bfc = \max\{1/\bfp_\old, 1/\bfp_\young\}$. Therefore,
	\begin{equation}\label{for:lem:growth:PPT:7}
		\langle \one_{(x,s)}, \bfT_{\kappa}^n \ubar{\one} \rangle_{\Scal_c} \leq \bfc \langle \one_{(x,s)}, \bfT_{\kappa}^n h \rangle_{\Scal_c} \leq \bfc \langle \one_{(x,s)}, \bar{\bfT}_{\kappa}^n h \rangle_{\Scal_e} = \bfc r(\bfT_{\kappa})^n h(x,s)~.
	\end{equation}
	
	Since $r(\bfT_{\kappa}) > 1$, from \eqref{for:lem:growth:PPT:2}, \eqref{for:lem:growth:PPT:6}, and \eqref{for:lem:growth:PPT:7}, we obtain
	\begin{equation}\label{for:lem:growth:PPT:8}
		\E[M_n(\bff(\emp))] \leq \bfc r(\bfT_{\kappa})^n \left[ \E[h(U_1, \Young)] + \E\left[h\big(U_1 U_2^{1/\chi}, \Young\big)\right] \right] \leq \bfc_1 r(\bfT_{\kappa})^n~,
	\end{equation}
	where $\bfc_1 = \bfc/2$. Plugging this upper bound for $\E[M_n(\bff(\emp))]$ into \eqref{for:lem:growth:PPT:1}, we obtain $r(\bfT_{\kappa})$ as an almost sure upper bound for $\gr{\PPT(m,\delta)}$. 
\end{proof}

Now that we have all the necessary results, we can prove Proposition~\ref{prop:inv:critical-temp:PPT}. We use \cite[{Theorem~2.1}]{Lyo89} to establish the result. Note that the notations used in \cite{Lyo89} differ from those in this paper. Therefore, to apply the conclusions from \cite{Lyo89}, we equate \( \frac{J}{kT} \) in \cite{Lyo89} with \( \beta \) in this paper.

\begin{proof}[Proof of Proposition~\ref{prop:inv:critical-temp:PPT}]
	By Proposition~\ref{prop:branching-number:PPT}, it follows immediately that for \(\delta\leq 0,~\beta_c(m,\delta)=0\).	
	From the definition of branching number and growth of a tree, a.s.\,,
	\begin{equation}\label{for:thm:inv:temp:00}
		\br{\Tree}\leq\gr{\Tree}~ \mbox{for all infinite tree }\Tree~.
	\end{equation}
	By Lemma~\ref{lem:growth:PPT}, Proposition~\ref{prop:branching-number:PPT} and \eqref{for:thm:inv:temp:00}, for \(\delta>0,\)
	\eqn{\label{for:thm:inv:temp:01}
	\br{\PPT(m,\delta)}=r(\bfT_{\kappa})
	~.}
	By \cite[Theorem~2.1]{Lyo89}, for \(\delta>0\),
	\begin{equation}\label{for:thm:inv:temp:2}
		\tanh(\beta_c(m,\delta))\br{\PPT(m,\delta)} = 1~.
	\end{equation}
	By \eqref{for:thm:inv:temp:01}, for \(\delta>0\),
	\begin{equation}\label{for:thm:inv:temp:3}
		\beta_c (m,\delta) = \arctanh\left( \frac{1}{r(\bfT_{\kappa})} \right)~.
	\end{equation}
	Substituting the explicit value of \( r(\bfT_{\kappa}) \) from \cite[Theorem~1.2]{HHR23} into \eqref{for:thm:inv:temp:3}, we obtain Proposition~\ref{prop:inv:critical-temp:PPT} immediately.
\end{proof}






\backmatter


\bmhead{Acknowledgments}
\begin{wrapfigure}{r}{0.2\textwidth}
	\centering
	\includegraphics[width=\linewidth]{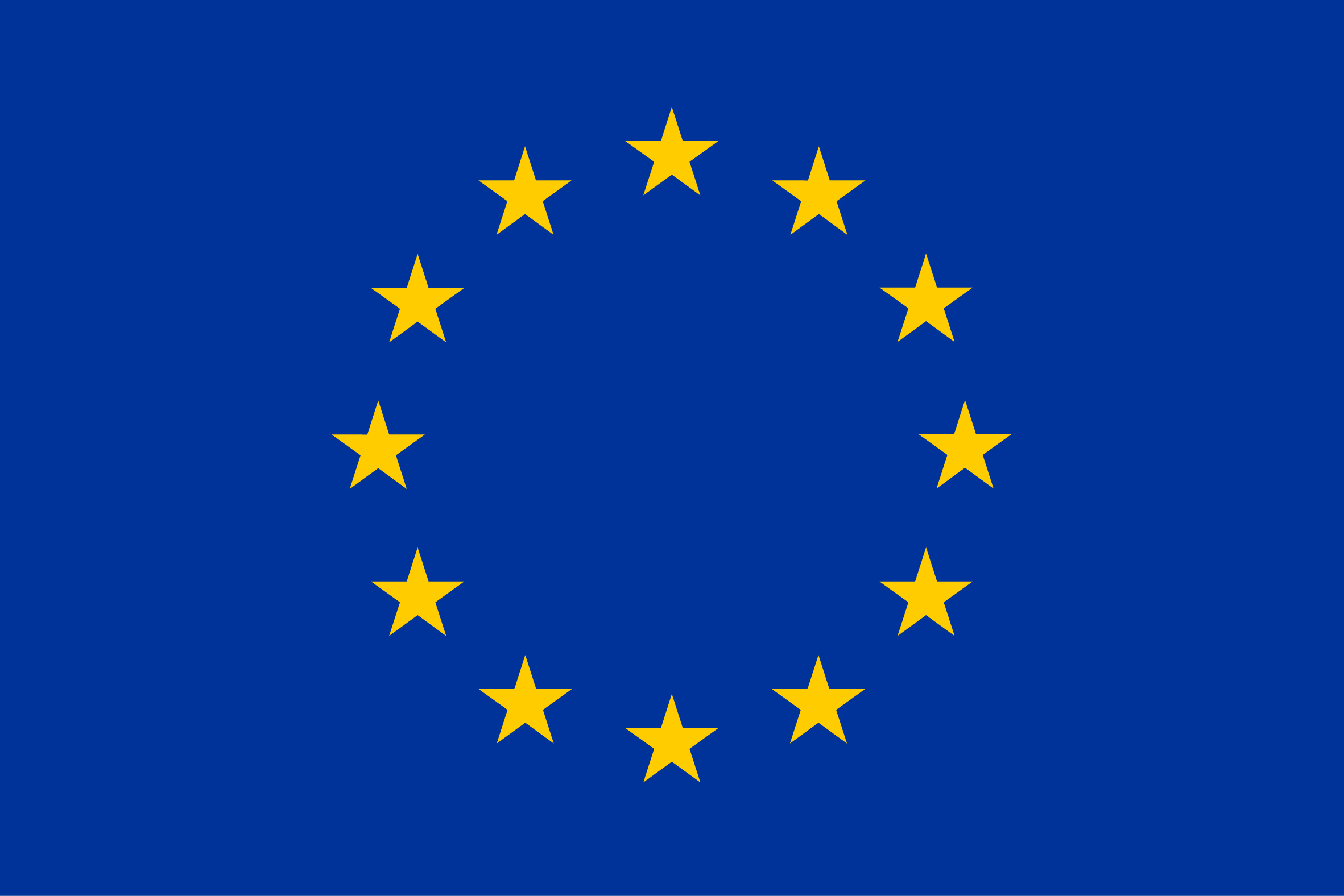} 
	\end{wrapfigure}
This work is supported in part by the Netherlands Organisation for Scientific Research (NWO) through the Gravitation {\sc Networks} grant 024.002.003 and the National Science Foundation under Grant No. DMS-1928930, while the authors were in residence at the Simons Laufer Mathematical Sciences Institute in Berkeley, California, during the Spring, 2025 semester. The work of RR is further supported by the European Union's Horizon 2020 research and innovation programme under the Marie Sk\l{}odowska-Curie grant agreement no.\ 945045.
{}






\begin{appendices}
\section{Remaining proofs}\label{appendix:results}
First, we prove Lemma~\ref{lem:h:magnetization}. This proof follows a similar approach to that in \cite[Proposition~5.13]{RvdHSF}, with adaptations made for a more general setting. Our goal is to demonstrate that \eqref{eq:prop:root-magnetization:1} is the unique fixed-point solution to the recursive equations in \eqref{eq:test:1}.

\begin{proof}[Proof of Lemma~\ref{lem:h:magnetization}]
	We prove Lemma~\ref{lem:h:magnetization} for a very general case where the tree is locally finite almost surely. Let $\Tree_{\ell}(\omega)$ denote $\Tree$ rooted at \(o\), such that $\bff(o)=\omega \in \Scal$, truncated at depth $\ell$, and let $\magn{\omega,\ell,+/f}$ denote the root magnetization given $\Tree_{\ell}(\omega)$ with an external magnetic field per vertex $B > 0$, under either the $+$ or free boundary condition. Since $\Tree$ is locally finite almost surely, by \cite[Lemma~3.1]{DGvdH10} there exists $M < \infty$ such that for all $\ell \geq 1$,
	\begin{equation}
		\label{for:lem-h:1}
		|\magn{\omega,\ell,+} - \magn{\omega,\ell,f}| \leq \frac{M}{\ell}, \quad \text{a.s. for all } \omega \in \Scal.
	\end{equation}
	Therefore, as $\ell \to \infty$, both $\magn{\omega,\ell,+}$ and $\magn{\omega,\ell,f}$ converge to the same limit, which we denote by $\magn{o \to}$, with \(\bff(o)=\omega\), defined in Section~\ref{sec:thermodynamic-limit:subsec:belief-propagation}.
	
	Note that, for all $\omega \in \Scal$, $\underline{\h}^{(\ell)}(\omega) \equiv \arctanh\big( \magn{\omega,\ell,f} \big)$, with initialisation $\underline{\h}^{(0)}(\omega) \equiv B$ for all $\omega \in \Scal$, satisfies the recursive equation in \eqref{eq:test:1} due to Lemma~\ref{lem:tree-pruning}. By the GKS inequality in Lemma~\ref{lem:GKS}, $\magn{\omega,\ell,f}$ is monotonically increasing in $\ell$. Furthermore, from \eqref{eq:test:1}, for all $\omega \in \Scal$, and $\ell\geq 1$,
	\[
	B = \underline{\h}^{(0)}(\omega) \leq \underline{\h}^{(\ell)}(\omega) \leq B + D(\emp) < \infty,
	\]
	for all $\ell \geq 1$, almost surely. Therefore, by the monotone convergence theorem, $\underline{\h}^{(\ell)}(\omega)$ converges to some limit $\underline{\h}(\omega)$ almost surely. Hence, $\underline{\h}$ is a fixed point of \eqref{eq:test:1} by \cite[{Proof of Lemma~2.3}]{DM10}.
	
	Similarly, for all $\omega \in \Scal,~\bar{\h}^{(\ell)}(\omega) \equiv \magn{\omega,\ell,+}$ also satisfies \eqref{eq:test:1}, with initialisation $\bar{\h}^{(0)}(\omega) \equiv \infty$ for all $\omega \in \Scal$. Then, $\bar{\h}^{(\ell)}(\omega)$ is monotonically decreasing and, for all $\omega \in \Scal$, and $\ell\geq1$,
	\[
	B \leq \bar{\h}^{(\ell)}(\omega) \leq \bar{\h}^{(1)}(\omega) = B + D(\emp) < \infty,
	\]
	almost surely. Therefore, $\bar{\h}^{(\ell)}(\omega)$ also converges to some limit $\bar{\h}(\omega)$ for all $\omega \in \Scal$.
	Therefore, by \cite[Lemma~3.1]{DGvdH10}, for all $\omega \in \Scal$,
	\begin{equation}
		\label{for:lem:h-uniqueness:8}
		\big| \tanh\big(\underline{\h}^{(\ell)}(\omega)\big) - \tanh\big( \bar{\h}^{(\ell)}(\omega) \big) \big| \leq \big| \magn{\omega,\ell,f} - \magn{\omega,\ell,+} \big| \to 0, \quad \text{as}~ \ell \to \infty,
	\end{equation}
	and this holds almost surely, proving that both the limits are unique, and $\h(\omega) = \arctanh\big(\magn{\omega \to}\big)$ is the unique fixed point solution of \eqref{eq:test:1}.
\end{proof}

Lemma~\ref{lem:limiting:edge-measure} is a direct consequence of the local convergence of preferential attachment models to the P\'{o}lya point tree:

\begin{proof}[Proof of Lemma~\ref{lem:limiting:edge-measure}]
	The LHS of \eqref{eq:lem:limiting:edge-measure-1} can be viewed as the expectation of the correlation $\langle \sigma_u \sigma_v \rangle_{\mu_n}$ with respect to a uniformly chosen edge $\{u,v\}$. For a uniformly chosen edge $\{u,v\}$, denote by $B_{\{u,v\}}(\ell)$ the subgraph of $G_n$ consisting of all vertices at a distance of at most $\ell$ from either $u$ or $v$. Therefore, by the GKS inequality in Lemma~\ref{lem:GKS},
	\begin{equation}
		\label{for:lem:limiting:edge-measure:1}
		\langle \sigma_u \sigma_v \rangle_{B_{\{u,v\}}(\ell)}^f \leq \langle \sigma_u \sigma_v \rangle_{\mu_n} \leq \langle \sigma_u \sigma_v \rangle_{B_{\{u,v\}}(\ell)}^+,
	\end{equation}
	where $\langle \sigma_u \sigma_v \rangle_{B_{\{u,v\}}(\ell)}^{f/+}$ represents the correlation in the Ising model on $B_{\{u,v\}}(\ell)$ with \emph{free} or $+$ boundary conditions.
	
	Note that every new vertex joins the graph with $m$ new edges. Therefore, $G_n$ has $n(m+o(1))$ edges. Hence, a uniformly chosen edge from $G_n$ can be viewed as a uniformly chosen out-edge from a uniformly random vertex. A finite neighbourhood of this uniformly chosen out-edge from a uniformly random vertex converges locally to the neighbourhood of the uniformly chosen out-edge of the root $\emp$ in the P\'{o}lya point tree. For any $\ell \geq 1$, the $\ell$-neighbourhood of a uniformly chosen edge in $G_n$ converges locally to the $\ell$-neighbourhood of $\hat{\emp}$ and $\emp1$, connected by an edge. Let us denote this tree as $\T(\ell)$. Consequently, as a result of local convergence and \cite[Lemma~6.4]{DM10}, for all $\ell \geq 1$, almost surely,
	\begin{equation}
		\label{for:lem:limiting:edge-measure:2}
		\lim_{n \to \infty} \E_n\Big[ \langle \sigma_u \sigma_v \rangle_{B_{(u,v)}(\ell)}^{f/+} \Big] = \E\Big[ \langle \sigma_u \sigma_v \rangle_{\T(\ell)}^{f/+} \Big].
	\end{equation}
	Now, using Lemmas~\ref{lem:tree-pruning} and \ref{lem:h:magnetization}, as $\ell \to \infty$,
	\begin{equation}
		\label{for:lem:limiting:edge-measure:3}
		\lim_{\ell \to \infty} \E\Big[ \langle \sigma_u \sigma_v \rangle_{\T(\ell)}^{f/+} \Big] = \E\Big[ \langle \sigma_u \sigma_v \rangle_{\nu_2^\prime} \Big],
	\end{equation}
	where $\nu_2^\prime(\sigma)$ is defined as
	\begin{equation}
		\label{for:lem:limiting:edge-measure:4}
		\nu_2^\prime(\sigma_1, \sigma_2) = \frac{1}{Z_2(\beta, \h(\hat{\emp}), \h(\hat{\emp}1))} \exp\Big[ \beta \sigma_1 \sigma_2 + \h(\hat{\emp}) \sigma_1 + \h(\hat{\emp}1) \Big].
	\end{equation}
	Simplifying the RHS of \eqref{for:lem:limiting:edge-measure:4}, we obtain
	\eqan{\label{for:lem:limiting:edge-measure:5}
		\E\big[\langle \sigma_1\sigma_2 \rangle_{\nu_2^\prime}\big]
		&=\E\left[ \frac{\e^{\beta+\h(\hat{\emp})+\h(\hat{\emp }1)}-\e^{-\beta-\h(\hat{\emp})+\h(\hat{\emp }1)}-\e^{-\beta+\h(\hat{\emp})-\h(\hat{\emp }1)}+\e^{\beta-\h(\hat{\emp})-\h(\hat{\emp }1)}}{\e^{\beta+\h(\hat{\emp})+\h(\hat{\emp }1)}+\e^{-\beta-\h(\hat{\emp})+\h(\hat{\emp }1)}+\e^{-\beta+\h(\hat{\emp})-\h(\hat{\emp }1)}+\e^{\beta-\h(\hat{\emp})-\h(\hat{\emp }1)}} \right]\nn\\
		&=\E\left[ \frac{\tanh(\beta)+\tanh(\h(\hat{\emp}))\tanh(\h(\hat{\emp }1))}{1+\tanh(\beta)\tanh(\h(\hat{\emp}))\tanh(\h(\hat{\emp }1))} \right]~,}
	thus proving \eqref{eq:lem:limiting:edge-measure-1}. Equation~\eqref{eq:lem:limiting:edge-measure-2} can be proved in a similar manner. Following the same steps as in \eqref{for:lem:limiting:edge-measure:1}--\eqref{for:lem:limiting:edge-measure:3} and using Lemma~\ref{lem:h:magnetization}, we can show that
	\begin{equation}
		\label{for:lem:limiting:edge-measure:6}
		\E_n[\langle \sigma_v \rangle_{\mu_n}] \overset{\prob}{\longrightarrow} \E\big[ \anglbrkt{\sigma_{\emp}}_{\mu} \big] = \E[\magn{\emp}] = \E[\tanh(\h(\emp))],
	\end{equation}
	thus completing the proof of the lemma.
\end{proof}

Lastly, we provide an alternative analytic proof of Lemma~\ref{lem:convexity:pressure-limit}. To prove this lemma, we show that the partial second derivatives of $\psi_n$ with respect to $\beta$ and $B$ are variances of some random variables, which essentially proves their non-negativity. 
\begin{proof}[Proof of Lemma~\ref{lem:convexity:pressure-limit}]
	We first perform the computation with respect to $\beta$; the computation for $B$ follows identically. For any $n \in \N$, differentiating $\psi_n$ with respect to $\beta$ gives
	\eqan{
		\frac{\partial}{\partial\beta}\psi_n(\beta,B)~=&~\frac{1}{n} \E_{\mu_n}\Big[ \sum\limits_{\{i,j\}\in E_n} \sigma_i\sigma_j \Big],\label{for:lem:convexity:1}\\
		\mbox{and}\quad \frac{\partial^2}{\partial\beta^2}\psi_n(\beta,B)~=&~\frac{1}{n}\Big[ \E_{\mu_n}\Big[ \Big( \sum\limits_{(i,j)\in E_n} \sigma_i\sigma_j \Big)^2 \Big]-\Big( \E_{\mu_n}\Big[ \sum\limits_{(i,j)\in E_n} \sigma_i\sigma_j \Big] \Big)^2 \Big]~,\label{for:lem:convexity:2}
	}
	where $\E_{\mu_n}$ denotes the expectation with respect to the $\mu_n$ measure. Note that the right-hand side of \eqref{for:lem:convexity:2} simplifies to ${\var}_{\mu_n}\left( \sum_{(i,j) \in E_n} \sigma_i \sigma_j \right)/n$. Therefore, $\frac{\partial^2}{\partial \beta^2} \psi_n(\beta, B)$ is non-negative almost surely, proving that $\beta \mapsto \psi_n(\beta, B)$ is a convex function. 
\end{proof}




\end{appendices}
\bibliography{bibliofile}

\end{document}